\newtheorem{theorem}{Theorem}[section]
\newtheorem{proposition}[theorem]{Proposition}
\newtheorem{lemma}[theorem]{Lemma}
\newtheorem{corollary}[theorem]{Corollary}
\theoremstyle{definition}
\newtheorem{example}[theorem]{Example}
\theoremstyle{remark}
\newtheorem{remark}[theorem]{Remark}  
\renewcommand\S{\mathcal S}
\DeclareMathOperator\lrm{lrm}
\newcommand\e{\mathrm e}
\renewcommand\Pr{\mathbb{P}}
\newcommand\E{\mathbb{E}}
\newcommand\W{\mathcal W}
\newcommand\U{\mathcal U}
\newcommand\Uc{\overline\U}
\newcommand\V{\mathcal V}
\newcommand\Vc{\overline\V}
\newcommand\uc{\overline u}
\newcommand\vc{\overline v}
\newcommand\tR{\widetilde R}
\newcommand\delete[1]{}
\newcommand\Q{\mathbb{Q}}
\newcommand\sinsq{\sin\!\left(\frac{\sqrt{3}x}{2}\right)}
\newcommand\cossq{\cos\!\left(\frac{\sqrt{3}x}{2}\right)}
\title{On a dart game of Niedermaier}
\author{Sergi Elizalde}
\date{}
\begin{document}

\maketitle

\begin{abstract}
We analyze a game introduced by Andy Niedermaier, where $p$ players take turns throwing a dart at a dartboard. A player is eliminated unless his dart lands closer to the center than all previously thrown darts, in which case he goes to the back of the line, until only one player remains. 
Using generating functions, we determine the distribution of the number of throws in the game, and we obtain a recursive formula to compute the probability that each player wins. 

\end{abstract}

\section{Introduction}

Consider the game where $p$ players are lined up, from player $1$ in the front to player $p$ in the back, to take turns throwing a dart at a dartboard.
When a player throws a dart, two things can happen:
\begin{itemize}
\item if the dart lands closer to the center than all previously thrown darts, the player stays in the game and moves to the back of the line;
\item otherwise, the player is eliminated.
\end{itemize}
The game ends when all the players but one have been eliminated, in which case the remaining player is the winner.

\begin{example}\label{ex1}
A 3-player game could go as follows. Player 1's dart lands at distance 6, and he moves to the back of the line. Then player 2's dart lands at distance 8, so player 2 is eliminated. Then player 3's dart lands at distance 5, so she moves to the back of the line.
It's now player 1's turn, and his dart lands at distance 4. Then player 3's dart lands at distance 2, and finally player 1's dart lands at distance 7, so he is eliminated, making player 3 the winner. 
\end{example}

The players are assumed to be equally skilled, so the distances between each of the thrown darts and the center of the dartboard can be modeled by a sequence or i.i.d.\ random variables. We assume that the probability that two darts land at the same distance of the center is zero.

We heard of this game from Andy Niedermaier~\cite{Nie}, who in turn learned about it from his graduate advisor Jeff Remmel. In unpublished work,
Niedermaier obtained formulas for the winning probability of each player when the game has at most $4$ players.

In this paper we extend his work and analyze additional aspects of the game. After relating the game to permutation statistics in Section~\ref{sec:Stirling}, in Section~\ref{sec:length}
we determine the average number of throws that take place in a $p$-player game, and more generally the distribution of this number of throws, expressed as a compact generating function. We then extend our method to study the number of further throws left at a given time in the game, knowing how many players remain and what is the best throw so far.
In Section~\ref{sec:winner}, addressing a question of Niedermaier~\cite{Nie}, we describe a recursive formula for the generating function giving the probability that each player wins the $p$-player game, refined by the number of throws (which we call the {\em length} of the game). 
Finally, in Section~\ref{sec:monotone} we show that, for a fixed number of players $p$, the probability that player $k$ wins is a decreasing function of $k$. 

Table~\ref{tab:notation} summarizes some of the notation used throughout the paper.

\begin{table}[h] \centering
\begin{tabular}{c|c|c}
parameter & description & variable \\ \hline
$n$ & number of dart throws & $z$\\
$p$ & number of players & $u$ \\
$k$ & winning player & $t$ \\
$x$ & distance of best throw so far & $x$
\end{tabular} \bigskip

\begin{tabular}{c|l}
notation & probability that $p$-player game ...  \\ \hline
$R_{n,p,k}$ & ... is won by player $k$ after $n$ throws \\
$Q_{n,p}=\sum_{k=1}^n R_{n,p,k}$ & ... ends after $n$ throws  \\
$P_{p,k}=\sum_{n\ge0} R_{n,p,k}$ & ... is won by player $k$  
\end{tabular} 
\caption{Summary of some parameters, variables, and other notation used in the paper.}
\label{tab:notation}
\end{table}

\section{Permutations and Stirling numbers of the first kind}\label{sec:Stirling}

Let $\S_n$ denote the set of permutations of $[n]=\{1,2,\dots,n\}$.
After $n$ darts have been thrown, their distances to the center are given by i.i.d.\ random variables $X_1,X_2,\dots,X_n$.
The relative order of these values determines a permutation $\pi\in\S_n$, whose one-line notation is obtained by replacing the smallest distance by a $1$, the second smallest distance by a $2$, and so on. This process generates each of the $n!$ permutations with the same probability.

We say that $i$ is a {\em left-to-right minimum} of $\pi$ if $\pi_i<\pi_j$ for all $j<i$. Let $\lrm(\pi)$ denote the number of left-to-right minima of $\pi$.
If $i\in[n]$ is not a left-to-right minimum of $\pi$, we say that $i$ is a {\em botch} of $\pi$. 
The player who makes the $i$th throw stays in the game if and only if $i$ is a left-to-right minimum in the corresponding permutation $\pi$, and is eliminated if $i$ is botch.
In Example~\ref{ex1}, the sequence of distances is $6,8,5,4,2,7$, and the corresponding permutation is $\pi=463215\in\S_6$, which has left-to-right minima in positions $1,3,4,5$.

It is well known \cite[Sec.\ 1.3]{EC1} that the number of permutations in $\S_n$ with $k$ left-to-right minima is equal to the signless Stirling number of the first kind $c(n,k)$, which also counts permutations in $\S_n$ with $k$ cycles. The exponential generating function for these numbers is
\begin{equation}\label{eq:GFStirling} 
\sum_{n,k\ge0} c(n,k)\, v^k \frac{z^n}{n!}=e^{v\log\frac{1}{1-z}}=\frac{1}{(1-z)^{v}}.
\end{equation}
Note that $c(n,k)=0$ if $k<0$ or $k>n$.

A complete game with $p\ge2$ players is encoded by a permutation that ends with a botch and has $p-1$ botches in total. Equivalently, by a permutation $\pi\in\S_n$ (for some $n\ge p$) having $n-p+1$ left-to-right minima, where position $n$ is not one of them (i.e., $\pi_n\neq 1$). 
Such a permutation is obtained uniquely by taking a permutation in $\S_{n-1}$ with $n-p+1$ left-to-right minima, appending an element $m\in\{2,3,\dots,n\}$, and shifting the other values accordingly (i.e., adding one to the entries $\ge m$).
Thus, the number of such permutations is
\begin{equation}\label{eq:complete} (n-1)\,c(n-1,n-p+1). \end{equation}

\section{The length of a game}\label{sec:length}

\subsection{The distribution of the number of throws}

For $p\ge1$, let $L_p$ be the random variable for the number of throws in a $p$-player game, and let 
$Q_{n,p}=\Pr(L_p=n)$, that is, the probability that a $p$-player game ends after exactly $n$ throws. Our next result gives a closed form for the generating function 
$$Q(u,z)=\sum_{\substack{n\ge0\\ p\ge1}} Q_{n,p} u^{p} z^n.$$

\begin{theorem}\label{thm:Quz} We have
$$Q(u,z)=u+\frac{u^2}{1-u}\left(1+\frac{z-1}{(1-uz)^{1/u}}\right).$$
\end{theorem}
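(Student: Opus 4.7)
The plan is to express $Q_{n,p}$ in terms of the Stirling numbers identified in Section~\ref{sec:Stirling} and then evaluate the resulting bivariate sum using~\eqref{eq:GFStirling}. By~\eqref{eq:complete}, for $p\ge 2$ and $n\ge p$ we have $Q_{n,p}=(n-1)\,c(n-1,n-p+1)/n!$, while the only nonzero contribution with $p=1$ is $Q_{0,1}=1$. Hence
\begin{equation*}
Q(u,z)=u+\sum_{p\ge 2}\sum_{n\ge p}\frac{(n-1)\,c(n-1,n-p+1)}{n!}\,u^{p}z^{n}.
\end{equation*}

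I would then reindex by $(m,k)=(n-1,n-p+1)$, so that $(n,p)=(m+1,\,m-k+2)$, and the constraints become $m\ge 1$ and $1\le k\le m$. After factoring out $u^{2}z$ and setting $y=uz$, $v=1/u$ (so that $u^{m-k}z^{m}=y^{m}v^{k}$), the double sum becomes
\begin{equation*}
u^{2}z\sum_{m\ge 1}\sum_{k=1}^{m}\frac{m}{(m+1)!}\,c(m,k)\,v^{k}y^{m}.
\end{equation*}
The key trick is to split $\tfrac{m}{(m+1)!}=\tfrac{1}{m!}-\tfrac{1}{(m+1)!}$. The first piece, $\sum_{m\ge 1,k\ge 1}c(m,k)v^{k}y^{m}/m!$, equals $(1-y)^{-v}-1$ directly from~\eqref{eq:GFStirling}, using $c(m,0)=0$ for $m\ge 1$. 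The second piece is handled by integrating~\eqref{eq:GFStirling} in the $z$ variable: since $\int_{0}^{y}(1-t)^{-v}\,dt=\bigl((1-y)^{1-v}-1\bigr)/(v-1)$, peeling off the $m=0$ boundary term yields $\sum_{m\ge 1,k\ge 1}c(m,k)v^{k}y^{m}/(m+1)!=\bigl((1-y)^{1-v}-1\bigr)/\bigl(y(v-1)\bigr)-1$.

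Subtracting these two closed forms and substituting back $y=uz$, $v=1/u$ gives, using $y(v-1)=z(1-u)$ and $(1-y)^{1-v}=(1-uz)(1-uz)^{-1/u}$,
\begin{equation*}
Q(u,z)-u=u^{2}z(1-uz)^{-1/u}-\frac{u^{2}\bigl((1-uz)(1-uz)^{-1/u}-1\bigr)}{1-u}.
\end{equation*}
The two terms containing $(1-uz)^{-1/u}$ combine with the algebraic simplification $z(1-u)-(1-uz)=z-1$, producing $\tfrac{u^{2}(z-1)}{(1-u)(1-uz)^{1/u}}$, and the leftover rational piece is $\tfrac{u^{2}}{1-u}$. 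Factoring $\tfrac{u^{2}}{1-u}$ gives exactly the stated identity.

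The main obstacle is the bookkeeping: the factor $(m+1)!$ in the denominator forces a formal integration of the Stirling EGF, and one has to track several boundary contributions (the cases $m=0$, $k=0$, and $p=1$) both when reindexing and when applying the integral identity $\int_{0}^{y}(1-t)^{-v}\,dt=((1-y)^{1-v}-1)/(v-1)$, which is only valid for $v\ne 1$; the value $u=1$ can then be recovered by continuity of $Q(u,z)$ as a formal power series in $z$.
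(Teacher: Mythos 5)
Your proof is correct and takes essentially the same route as the paper: both start from $Q_{n,p}=(n-1)\,c(n-1,n-p+1)/n!$, split $(n-1)/n!$ as $1/(n-1)!-1/n!$, evaluate the first piece directly from~\eqref{eq:GFStirling} and the second by integrating the Stirling EGF, and then simplify. Your reindexing via $(m,k)$ and the substitution $y=uz$, $v=1/u$ is just a more explicit bookkeeping of the same computation.
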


\begin{proof}
For $p=1$, the game ends with zero throws, corresponding to the summand $u$.

For $p\ge2$, the game requires at least $2$ throws, since no player is eliminated after the first throw. Fix $n\ge2$, and let $X_1,X_2,\dots,X_n$ be the i.i.d.\ random variables that describe the distance of the first $n$ darts from the center, which determine a uniformly random permutation $\pi\in\S_n$. Then $\pi$ encodes a complete game with $p$ players if and only if $\lrm(\pi)=n-p+1$ and $\pi(n)\neq 1$. The number of such permutations is given by equation~\eqref{eq:complete}. Any $\pi\in\S_n$ that is not of this form would encode a game where either $p-1$ players get eliminated before the $n$th throw takes place (and thus the game ends before the $n$th throw, making the remaining throws irrelevant), or fewer than $p-1$ players have been eliminated when the $n$th throw takes place (and thus the game has not ended at this point yet).

It follows that the probability that the game ends after exactly $n$ throws is 
$$Q_{n,p}=\frac{(n-1)\,c(n-1,n+1-p)}{n!}.$$ 
Summing over $n$ and $p$,
\begin{align*}Q(u,z)&=u+ \sum_{n,p\ge2} (n-1)\,c(n-1,n+1-p)\, u^{p}\frac{z^n}{n!}\\
&=u+\sum_{\substack{n\ge1\\ p\ge2}} c(n-1,n+1-p)\, u^{p}\frac{z^n}{(n-1)!}-\sum_{\substack{n\ge1\\ p\ge2}} c(n-1,n+1-p)\, u^{p}\frac{z^n}{n!}.
\end{align*}
By equation~\eqref{eq:GFStirling},
$$
\sum_{\substack{n\ge1\\ p\ge2}} c(n-1,n+1-p)\, u^{p} \frac{z^{n-1}}{(n-1)!}=u^2 \sum_{n,k\ge0} c(n,k)\, u^{n-k} \frac{z^n}{n!}=\frac{u^2}{(1-uz)^{1/u}}.$$
Thus, we can write
\begin{align*}Q(u,z)
=u+\frac{u^2z}{(1-uz)^{1/u}}-\int_0^z \frac{u^2}{(1-ut)^{1/u}}\,dt
=u+\frac{u^2}{1-u}\left(1+\frac{z-1}{(1-uz)^{1/u}}\right).
\end{align*}
\end{proof}

Let now $E_{p}=\E(L_p)$, that is, the expected number of dart throws in a $p$-player game, and denote the corresponding generating function by
$$E(u)=\sum_{p\ge1} E_{p} u^{p}.$$ 

\begin{corollary} We have
$$E(u)=u^2(1-u)^{-1-1/u},$$
and so the expected length of a $p$-player game is asymptotically
$$E_p=p+\log p+ \gamma -1+o(1),$$
where $\gamma\approx0.5772$ is Euler's constant.
\end{corollary}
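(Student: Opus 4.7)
The plan has two parts: derive $E(u)$ from $Q(u,z)$, then analyze its coefficient asymptotics at $u=1$.

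For the generating function, since $E_p=\sum_n n\,Q_{n,p}$, we have $E(u)=\partial_z Q(u,z)\big|_{z=1}$. In the formula from Theorem~\ref{thm:Quz}, only the summand containing $z-1$ contributes after setting $z=1$. A product-rule computation gives $\partial_z\bigl[(z-1)(1-uz)^{-1/u}\bigr] = (1-uz)^{-1/u}+(z-1)(1-uz)^{-1-1/u}$, which at $z=1$ equals $(1-u)^{-1/u}$. Multiplying by $u^2/(1-u)$ yields $E(u)=u^2(1-u)^{-1-1/u}$.

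For the asymptotic, I would factor the singular behavior at $u=1$ by writing $(1-u)^{-1-1/u}=(1-u)^{-2}\exp(\theta(u))$ with $\theta(u):=-\frac{1-u}{u}\log(1-u)$, which tends to $0$ as $u\to 1$. Expanding $\exp(\theta(u))=1+\theta(u)+R(u)$ with $R(u)=O(\theta(u)^2)$ near $u=1$ gives
$$E(u)=\frac{u^2}{(1-u)^2}-\frac{u\log(1-u)}{1-u}+\frac{u^2 R(u)}{(1-u)^2}.$$
Standard coefficient extractions give $[u^p]\frac{u^2}{(1-u)^2}=p-1$ and $[u^p]\frac{-u\log(1-u)}{1-u}=H_{p-1}$, using the identity $\frac{-\log(1-u)}{1-u}=\sum_{n\ge1}H_n u^n$. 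Combined with $H_{p-1}=\log p+\gamma+o(1)$, these contribute the main terms $p+\log p+\gamma-1$.

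The remaining step is to show $[u^p]\frac{u^2 R(u)}{(1-u)^2}=o(1)$. Since $\theta(u)^2=(1-u)^2\log^2(1-u)/u^2$, the factors of $(1-u)^{\pm 2}$ cancel and $\frac{u^2 R(u)}{(1-u)^2}=O(\log^2(1-u))$ as $u\to 1$. Because $E(u)$ is analytic in a slit neighborhood of $u=1$ (the only branch comes from $\log(1-u)$), Flajolet--Sedgewick singularity analysis then gives the coefficient bound $O(\log p/p)=o(1)$. This tail estimate is the main technical obstacle; all the rest is direct computation. An alternative, more elementary route would be to expand $\exp(\theta(u))=\sum_k \theta(u)^k/k!$ and bound each $[u^p]$ individually using the known coefficients of $(1-u)^a\log^b(1-u)$.
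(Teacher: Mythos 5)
Your proposal is correct and follows essentially the same route as the paper: differentiate $Q(u,z)$ at $z=1$ to get $E(u)=u^2(1-u)^{-1-1/u}$, then extract coefficient asymptotics from the singular expansion at $u=1$ via Flajolet--Sedgewick transfer. Your factorization $(1-u)^{-1-1/u}=(1-u)^{-2}e^{\theta(u)}$ just makes explicit the expansion the paper states directly, and your remainder bound $O(\log^2(1-u))$ (transferring to $o(1)$ coefficients) is a valid, slightly sharper form of the paper's $o(1/(1-u))$ error term.
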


\begin{proof}
The generating function for $E_{p}=\sum_{n\ge0}n\,Q_{n,p}$ is given by
$$E(u)=\left.\frac{\partial Q(u,z)}{\partial z}\right|_{z=1}=u^2(1-u)^{-1-1/u},$$
by Theorem~\ref{thm:Quz}.

To determine the asymptotic behavior of the coefficients, we observe that $E(u)$ is analytic in the complex plane slit along $\mathbb{R}_{\ge1}$, and
compute its singular expansion at $u=1$:
$$E(u)=\frac{1}{(1-u)^2}+\frac{1}{1-u}\log \frac{1}{1-u}-\frac{2}{1-u}+1+o\left(\frac{1}{1-u}\right).$$
Then we use the transfer technique from Flajolet and Sedgewick \cite[Sec.\ VI.4]{FS} to asymptotically estimate the coefficient of $u^p$, similarly to how it is done in \cite[Ex.\ VI.4]{FS}.
\end{proof}

The first few terms of the expansion of this generating function are
$$E(u)=\e\,u^2+\frac{3}{2} \e\, u^3 +\frac{47}{24} \e\, u^{4}+\frac{115}{48} \e\, u^{5}+\frac{16247}{5760} \e\, u^{6}+\frac{37289}{11520} \e\, u^{7}+\frac{10587043}{2903040} \e\, u^{8}+\frac{2614099}{645120} \e\, u^{9}+\dots
$$

\subsection{The number of remaining throws}

Next we extend the above results to study the number of remaining throws at any given time during the game.
Suppose that, at a given point, there are $p$ players left, and $d$ is the distance to the center of the best throw so far, with the convention $d=\infty$ at the beginning of the game. Denote by $x=\Pr(X_i<d)\in[0,1]$ the probability that a given throw lands closer to the center, or equivalently the probability that the next player is not eliminated. Let $L_p(x)$ be the random variable for the number of throws remaining in this situation. By definition, $L_p(1)=L_p$, and it is easy to see that $L_p(0)=p-1$, since in the case that $x=0$, the next $p-1$ players to throw are eliminated and the game ends. 
In analogy to the definition of $Q_{n,p}$, let $Q_{n,p}(x)=\Pr(L_p(x)=n)$ be the probability that, when there are $p$ players left
and the probability that a given throw beats the best throw so far is $x$, the game ends after exactly $n$ further throws. For $p\ge1$, let
$$Q(x,u,z)=\sum_{\substack{n\ge0\\ p\ge1}}Q_{n,p}(x) u^pz^n.$$

\begin{remark}\label{rem:uniform}
Without loss of generality, we will assume from now on that the random variables $X_i$ that describe the distance of the throws to the center are uniformly distributed in $[0,1]$, in which case we can interpret $x$ as the smallest distance to the center of the best throw so far, that is, the distance to beat in order to stay in the game.
\end{remark}

\begin{lemma}\label{lem:Qnp}
For $n\ge1$ and $p\ge2$ , the polynomials $Q_{n,p}(x)$ satisfy the recurrence
\begin{equation}\label{eq:Qnp} Q_{n,p}(x)=(1-x)Q_{n-1,p-1}(x)+\int_0^x Q_{n-1,p}(v)\,dv,\end{equation}
with initial conditions $Q_{0,1}(x)=1$, $Q_{n,1}(x)=0$ for $n\ge1$, and $Q_{0,p}(x)=0$ for $p\ge2$.
\end{lemma}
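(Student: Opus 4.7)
The plan is to derive \eqref{eq:Qnp} by conditioning on the outcome of the next dart throw. Invoking Remark~\ref{rem:uniform}, I would assume that the $X_i$ are i.i.d.\ uniform on $[0,1]$, so that $x$ is literally the current best distance and the next throw $V$ is uniform on $[0,1]$, independent of everything else.

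There are then two mutually exclusive cases to consider. First, with probability $1-x$ the event $V>x$ occurs: the player is eliminated, the best distance remains $x$, and the state becomes $p-1$ players with the same threshold $x$. The remaining number of throws is then distributed as $L_{p-1}(x)$, and this case contributes $(1-x)\,Q_{n-1,p-1}(x)$ to $Q_{n,p}(x)$. Second, conditional on $V=v$ for some $v\in[0,x]$ (a density-$1$ event on that interval), the thrower survives and moves to the back of the line, the best distance updates from $x$ to $v$, and the state becomes $p$ players with new threshold $v$; the remaining number of throws is then distributed as $L_p(v)$. Integrating against the uniform density of $V$ on $[0,x]$ gives the contribution $\int_0^x Q_{n-1,p}(v)\,dv$, and summing the two contributions produces the stated recurrence.

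The initial conditions follow immediately from the definitions: when $p=1$ the game has already ended, so $L_1(x)=0$ deterministically, giving $Q_{0,1}(x)=1$ and $Q_{n,1}(x)=0$ for $n\ge1$; and for $p\ge2$ at least one throw is required before the game can end, so $Q_{0,p}(x)=0$. The one step that deserves explicit care, and which I regard as the main obstacle to writing the argument rigorously, is the Markov-type property implicit in the notation $L_p(x)$: after the conditioning described above, the distribution of the remaining number of throws really depends on the past only through the pair (current number of players, current best distance). This is a direct consequence of the i.i.d.\ uniform assumption, but is the one place where a careless conditioning argument could go wrong, so I would state it explicitly before combining the two cases.
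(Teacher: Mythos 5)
Your proposal is correct and follows essentially the same argument as the paper: condition on the next throw, with the elimination event (probability $1-x$) giving the term $(1-x)Q_{n-1,p-1}(x)$ and the survival event, integrated over the new best distance $v\in[0,x]$, giving $\int_0^x Q_{n-1,p}(v)\,dv$, with the same initial conditions. Your explicit remark about the Markov-type property (that the future depends only on the pair consisting of the number of remaining players and the current best distance) is a reasonable point of care that the paper leaves implicit.
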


\begin{proof}
To check the initial conditions, note that the game ends precisely when there is one player left. Thus, if $p=1$, the number of further throws is always $0$ (with probability $1$), and if $p\ge2$, the number of further throws is never $0$.

To prove equation~\eqref{eq:Qnp}, we consider two options for the player that throws next. If he gets eliminated, which happens with probability $1-x$, then the game reduces to a $(p-1)$-player game with one fewer throw left. Otherwise, his throw lands at distance $v$, for some $0\le v<x$, and the game reduces to a $p$-player game with one fewer throw left, where the distance to beat is $v$.

Finally, the recurrence and the initial conditions imply that $Q_{n,p}(x)$ is always a polynomial of degree at most~$n$.
\end{proof}

The polynomials $Q_{n,p}(x)$ for small values of $n$ and $p$ are given in Table~\ref{tab:Qnp}. From Lemma~\ref{lem:Qnp}, we obtain the following compact expression for the generating function $Q(x,u,z)$.

\begin{table}[htb]
$$\begin{array}{c|ccccc}
\hbox{\diagbox{$n$}{$p$}}& 1&2&3&4&5\\ \hline
0& 1&0&0&0&0\\
1& 0&1-x&0&0&0\\
2& 0&x-\dfrac{x^2}{2}&(1-x)^2&0&0\\
3& 0&\dfrac{x^2}{2}-\dfrac{x^3}{6}&2x-\dfrac{5x^2}{2}+\dfrac{5x^3}{6}&(1-x)^3&0\\
4& 0&\dfrac{x^3}{6}-\dfrac{x^4}{24}&\dfrac{3x^2(2-x)^2}{8}& 3x-6x^2+\dfrac{13x^3}{3}-\dfrac{13x^4}{12} &(1-x)^4
\end{array}$$
\caption{The polynomials $Q_{n,p}(x)$ giving the probability that, when $p$ players remain and the distance to beat is $x\in[0,1]$, the game ends after $n$ further throws, for $n\le4$ and $p\le5$.}
\label{tab:Qnp}
\end{table}

\begin{theorem}\label{thm:Qxuz} We have
$$Q(x,u,z)=\frac{u}{1-u}+\left(\frac{u}{1-uz}-\frac{u}{1-u}\right)\left(\frac{1-uz}{1-(1-x)uz}\right)^{1-{1/u}}.$$
\end{theorem}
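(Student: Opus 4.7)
The plan is to convert the recurrence from Lemma~\ref{lem:Qnp} into a functional equation, then into a first-order linear ODE in the variable $x$, and solve it. First I would multiply both sides of~\eqref{eq:Qnp} by $u^p z^n$ and sum over $n\ge 1$ and $p\ge 2$. The left-hand side becomes $Q(x,u,z)-u$, using the initial conditions $Q_{0,1}(x)=1$ (contributing the $u$) and all other boundary terms vanishing. The first term on the right telescopes to $(1-x)uz\, Q(x,u,z)$ after shifting indices, and the second term becomes $z\int_0^x (Q(v,u,z)-u)\,dv = z\int_0^x Q(v,u,z)\,dv - uxz$. Collecting, this produces the integral equation
\begin{equation*}
\bigl(1-(1-x)uz\bigr)\,Q(x,u,z) \;=\; u(1-xz) + z\int_0^x Q(v,u,z)\,dv.
\end{equation*}

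Next I would differentiate both sides with respect to $x$. The left side gives $uz\,Q + (1-(1-x)uz)\,Q_x$, while the right side gives $-uz + z\,Q$. After rearranging, this is the first-order linear ODE
\begin{equation*}
\bigl(1-(1-x)uz\bigr)\,Q_x \;=\; z(1-u)\,Q - uz.
\end{equation*}
An integrating factor is $\bigl(1-(1-x)uz\bigr)^{1-1/u}$, since a direct substitution $w=1-(1-x)uz$ shows that $\int \frac{z(1-u)}{1-(1-x)uz}\,dx = \frac{1-u}{u}\log\bigl(1-(1-x)uz\bigr)$. After multiplying through, the left-hand side becomes $\frac{d}{dx}\bigl[(1-(1-x)uz)^{1-1/u}\,Q\bigr]$, and the right-hand side is $-uz\bigl(1-(1-x)uz\bigr)^{-1/u}$, which I can integrate explicitly via the same substitution.

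To pin down the constant of integration, I would use the initial value $Q(0,u,z)$. Setting $x=0$ in the recurrence gives $Q_{n,p}(0)=Q_{n-1,p-1}(0)$, and together with $Q_{0,1}(0)=1$ this forces $Q_{n,p}(0)=\delta_{n,p-1}$; summing yields
\begin{equation*}
Q(0,u,z) \;=\; \sum_{p\ge 1} u^p z^{p-1} \;=\; \frac{u}{1-uz},
\end{equation*}
which is consistent with the observation $L_p(0)=p-1$ made before the lemma. Evaluating the integrated ODE from $0$ to $x$ with this boundary value, dividing by $(1-(1-x)uz)^{1-1/u}$, and collecting the constant terms $\tfrac{u}{1-u}$ separately should produce the stated closed form. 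The main obstacle is simply the algebraic massage at the end: one has to recognize that $\frac{u}{1-uz}-\frac{u}{1-u}=\frac{u^2(z-1)}{(1-u)(1-uz)}$ in order to match the particular form displayed in the theorem, but this is a routine manipulation once the ODE has been solved.
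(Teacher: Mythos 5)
Your proposal is correct and follows essentially the same route as the paper: sum the recurrence of Lemma~\ref{lem:Qnp} into the integral equation $\bigl(1-(1-x)uz\bigr)Q=u(1-xz)+z\int_0^x Q(v,u,z)\,dv$, differentiate in $x$ to obtain the linear ODE $\bigl(1-(1-x)uz\bigr)Q_x=(1-u)zQ-uz$, and solve it with the boundary value $Q(0,u,z)=\frac{u}{1-uz}$. The only difference is that you spell out the integrating factor and the verification of the initial condition, which the paper leaves implicit.
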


\begin{proof}
Multiplying equation~\eqref{eq:Qnp} by $u^pz^n$ on both sides, summing over $n\ge1$ and $p\ge2$, and adding the initial conditions, we obtain
\begin{align*}Q(x,u,z) &= u+\sum_{\substack{n\ge1\\ p\ge2}} Q_{n,p}(x) u^pz^n \\
&= u+(1-x)\sum_{\substack{n\ge1\\ p\ge2}}  Q_{n-1,p-1}(x) u^pz^n +\int_0^x \sum_{\substack{n\ge1\\ p\ge2}} Q_{n-1,p}(v) u^pz^n \,dv\\
&= u+(1-x)uz Q(x,u,z) +\int_0^x z(Q(v,u,z)-u)  \,dv\\
&= u(1-xz)+(1-x)uz Q(x,u,z) +z \int_0^x Q(v,u,z) \,dv.
\end{align*}

Differentiating with respect to $x$, we get
$$(1-(1-x)uz)\frac{\partial Q(x,u,z)}{\partial x} =(1-u)zQ(x,u,z)-uz.$$
The initial condition $Q(0,u,z)=\frac{u}{1-uz}$ follows from Lemma~\ref{lem:Qnp} or directly from the fact that, with $x=0$ and $p$ players remaining, the game always ends after $p-1$ throws since every throw is a losing one. Solving this differential equation yields the stated expression for $Q(x,u,z)$.
\end{proof}

Setting $z=1$ in Theorem~\ref{thm:Qxuz} yields $Q(x,u,1)=\frac{u}{1-u}$, which is equivalent to $\sum_{n} Q_{n,p}(x)=1$ for all $p\ge1$, agreeing with the fact that $\{Q_{n,p}(x)\}_{n}$ is a probability distribution for fixed $p$ and $x\in[0,1]$.

Let now $E_{p}(x)=\E(L_p(x))$, that is, the expected number of remaining dart throws when there are $p$ players left and the distance to beat is $x\in[0,1]$. Let
$$E(x,u)=\sum_{p\ge1} E_{p}(x) u^{p}.$$ 

\begin{corollary} We have
$$E(x,u)=\left.\frac{\partial Q(x,u,z)}{\partial z}\right|_{z=1}=\frac{u^2}{(1-u)^2}\left(\frac{1-u}{1-(1-x)u}\right)^{1-{1/u}},$$
and so the expected number of remaining throws when there are $p$ players left and the distance to beat is $x\in[0,1]$ is asymptotically
$$E_p(x)=p+\log p+ \gamma -1+\log x + o(1).$$
\end{corollary}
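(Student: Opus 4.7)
The plan is to mirror the structure of the previous corollary: first derive the closed form by differentiating the expression for $Q(x,u,z)$ in Theorem~\ref{thm:Qxuz}, then extract the $u^p$ coefficient via singularity analysis. For the closed form, I would rewrite
$$Q(x,u,z) - \frac{u}{1-u} = \left(\frac{u}{1-uz}-\frac{u}{1-u}\right) A(x,u,z), \qquad A(x,u,z) := \left(\frac{1-uz}{1-(1-x)uz}\right)^{1-1/u},$$
and apply the product rule in $z$. At $z=1$ the prefactor $\frac{u}{1-uz}-\frac{u}{1-u}$ vanishes, so the term involving $\partial_z A$ drops out and only
$$\left.\frac{\partial}{\partial z}\frac{u}{1-uz}\right|_{z=1}\cdot A(x,u,1) = \frac{u^2}{(1-u)^2}\cdot\left(\frac{1-u}{1-(1-x)u}\right)^{1-1/u}$$
survives, giving the stated formula for $E(x,u)$.

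For the asymptotic estimate, I would fix $x\in(0,1]$ and compute the singular expansion of $E(x,u)$ at $u=1$ before invoking the transfer theorems of \cite[Sec.~VI.4]{FS}, exactly as in the proof of the previous corollary. With the substitution $w=1-u$, the algebraic prefactor expands as $\frac{u^2}{(1-u)^2}=\frac{1}{w^2}-\frac{2}{w}+1$. For the other factor, using $1-1/u = -w + O(w^2)$ together with $\log\frac{w}{x+(1-x)w}=\log w - \log x + O(w)$ and exponentiating gives
$$\left(\frac{1-u}{1-(1-x)u}\right)^{1-1/u} = 1 - w\log w + w\log x + O(w^2\log^2 w).$$
Multiplying the two pieces produces the singular expansion
$$E(x,u) = \frac{1}{(1-u)^2} + \frac{1}{1-u}\log\frac{1}{1-u} + \frac{\log x - 2}{1-u} + o\!\left(\frac{1}{1-u}\right),$$
which specializes to the expansion used in the previous corollary when $x=1$.

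Applying standard singularity transfer, with $[u^p]\frac{1}{(1-u)^2}=p+1$, $[u^p]\frac{1}{1-u}\log\frac{1}{1-u}=H_p=\log p+\gamma+o(1)$, and $[u^p]\frac{1}{1-u}=1$, then yields $E_p(x)=(p+1)+H_p+(\log x - 2)+o(1)$, which rearranges to the claimed asymptotic. The one point that requires care is verifying that the error term $O(w^2\log^2 w)$ in the expansion above, once multiplied by $\frac{1}{w^2}$, remains $o\!\left(\frac{1}{1-u}\right)$ uniformly on an appropriate $\Delta$-domain around $u=1$ so that the transfer theorem applies; this is analogous to the control already exercised in the previous corollary, so no essentially new difficulty arises. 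Note that the formula tacitly requires $x>0$, which is consistent with the fact that $E_p(0)=p-1$ is captured directly and does not need an asymptotic.
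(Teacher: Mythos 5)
Your proposal is correct and follows the same route as the paper's (very terse) proof: differentiate the closed form from Theorem~\ref{thm:Qxuz} at $z=1$, compute the singular expansion at $u=1$, and transfer to coefficient asymptotics. Your expansion agrees with the paper's (which additionally records the constant term $\tfrac{1}{x}$, irrelevant to the $o(1)$ estimate), and your observation that the product-rule cross term vanishes at $z=1$ is exactly the computation the paper leaves implicit.
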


\begin{proof}
By differentiating the expression in Theorem~\ref{thm:Qxuz}, we obtain the following.
$$E(x,u)=\left.\frac{\partial Q(x,u,z)}{\partial z}\right|_{z=1}=\frac{u^2}{(1-u)^2}\left(\frac{1-u}{1-(1-x)u}\right)^{1-{1/u}}.$$

The singular expansion at $u=1$ is now
$$E(x,u)=\frac{1}{(1-u)^2}+\frac{1}{1-u}\log \frac{1}{1-u}+\frac{\log x - 2}{1-u}+\frac{1}{x}+o\left(\frac{1}{1-u}\right),$$
which yields the stated asymptotic estimate for the coefficient of $u^p$.
\end{proof}

\section{Refining by the winning player}\label{sec:winner}

Next we refine the generating functions in Section~\ref{sec:length} to keep track of the winner of the game. For $1\le k\le p$, let $P_{p,k}$ denote the probability that player $k$ wins the $p$-player game, and let $R_{n,p,k}$ denote the probability that this happens in exactly $n$ throws (the letter $R$ stands for {\em refined}). 
With the assumption from Remark~\ref{rem:uniform}, we further refine these values for $x\in[0,1]$ as follows. 
Let $P_{p,k}(x)$ be the probability that, at a point in the game when $p$ players remain and the best throw so far is $x$ (with the convention that $x=1$ at the beginning), the $k$th player (in the order in which players are about to throw) wins the game. Let $R_{n,p,k}(x)$ be the probability that this happens with exactly $n$ additional throws. Note that $P_{p,k}(1)=P_{p,k}$ and $R_{n,p,k}(1)=R_{n,p,k}$.

\subsection{Computing the probability of winning}

In this section we will give a recursive formula to compute generating functions for the probabilities $P_{p,k}(x)$ and $R_{n,p,k}(x)$. Let us start by describing a recurrence for  the values $R_{n,p,k}(x)$, which refines Lemma~\ref{lem:Qnp}.

\begin{lemma}\label{lem:Rnpk}
For $n\ge1$, $p\ge2$ and $1\le k\le p$, the polynomials $R_{n,p,k}(x)$ satisfy the recurrence
\begin{align*}
R_{n,p,1}(x)&=\int_0^x R_{n-1,p,p}(v)\,dv,\\ 
R_{n,p,k}(x)&=(1-x)R_{n-1,p-1,k-1}(x)+\int_0^x R_{n-1,p,k-1}(v)\,dv \qquad \text{for }2\le k\le p,
\end{align*}
with initial conditions $R_{0,1,1}(x)=1$, $R_{n,1,1}(x)=0$ for $n\ge1$, and $R_{0,p,k}(x)=0$ for $2\le k\le p$.
\end{lemma}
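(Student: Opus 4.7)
The plan is to follow the same conditioning argument used in the proof of Lemma~\ref{lem:Qnp}, but this time tracking the identity of the eventual winner in addition to the number of throws. The initial conditions will come immediately from the definitions: a game with a single player remaining is already over and that player is declared the winner (giving $R_{0,1,1}(x)=1$ and $R_{n,1,1}(x)=0$ for $n\ge 1$), while any game with $p\ge 2$ requires at least one further throw before it can end (giving $R_{0,p,k}(x)=0$ for $2\le k\le p$).

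For the recurrence, I would condition on what happens to the next thrower, namely the player currently occupying position~$1$. If this player is eliminated, which happens with probability $1-x$, the remaining $p-1$ players keep their relative order and each shifts forward by one position, with the distance to beat still $x$ and one fewer throw remaining. For the player originally in position $k$ to still be in the game, we need $k\ge 2$; conditional on this, the probability that they win in the remaining $n-1$ throws is $R_{n-1,p-1,k-1}(x)$, explaining the term $(1-x)R_{n-1,p-1,k-1}(x)$. If instead the throw beats the current record, landing at some distance $v\in[0,x]$ (with uniform probability density by Remark~\ref{rem:uniform}), then player~$1$ moves to the back, becoming the new position $p$, and every other player shifts forward by one position while the new distance to beat becomes $v$. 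When $k\ge 2$, this contributes $\int_0^x R_{n-1,p,k-1}(v)\,dv$. When $k=1$, the elimination case contributes nothing (player~$1$ cannot win if just eliminated), and in the successful case player~$1$ is now in position $p$, which accounts for the single-term formula $R_{n,p,1}(x)=\int_0^x R_{n-1,p,p}(v)\,dv$.

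The main obstacle, such as it is, is purely bookkeeping: making sure that "position $k$" always refers to the index in the current queue of players waiting to throw, and that after player~$1$'s turn (whether successful or not) every other player's position decreases by one, so that winning from position $k$ with $n$ throws left reduces to winning from position $k-1$ with $n-1$ throws left. As in Lemma~\ref{lem:Qnp}, an induction on $n$ using the recurrence and initial conditions shows that each $R_{n,p,k}(x)$ is a polynomial of degree at most $n$, so the integrals $\int_0^x R_{n-1,p,k-1}(v)\,dv$ are well defined and the argument closes.
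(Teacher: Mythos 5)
Your proposal is correct and follows essentially the same argument as the paper: condition on whether the next thrower is eliminated (probability $1-x$) or lands at some $v\in[0,x]$, track the relabeling of positions (player $k$ becomes $k-1$, or position $p$ when $k=1$ and the throw succeeds), and note that the recurrence with the initial conditions forces $R_{n,p,k}(x)$ to be a polynomial. No gaps.
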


\begin{proof}
The initial conditions are clear since the game requires no further throws if and only if $p=1$.

Let us prove the first equation. For the first player to win the $p$-player game where the distance to beat is $x$, first his dart must land at distance $v$ for some $0\le v< x$. After moving to the end of the line, he must then win the subsequent $p$-player game, where the distance to beat is $v$, and where he now plays the role of player $p$, with one fewer throw left.

The second equation is proved similarly, but now there are two options for the first throw. With probability $1-x$, it is a losing throw and the game reduces to a $(p-1)$-player game, where player $k$ (for $2\le k\le p)$ in the original game becomes player $k-1$ in the new game, and the distance to beat is still $x$. 
The other option is that the first throw lands at distance $v$ for some $0\le v<x$, in which case the game reduces to a $p$-player game, where again player $k$ (for $2\le k\le p)$ in the original game becomes player $k-1$ in the new game, and the distance to beat is now $v$.

Finally, the recurrence and the initial conditions imply that $R_{n,p,k}(x)$ is always a polynomial of degree at most~$n$.
\end{proof}

The polynomials $R_{n,p,k}(x)$ for small values of $n$ and $p$ are given in Table~\ref{tab:Rnpk}. 
By definition, \[\sum_{k=1}^p R_{n,p,k}(x)=Q_{n,p}(x),\] whose generating function 
is given in Theorem~\ref{thm:Qxuz}. In particular, adding the $k$ equalities in Lemma~\ref{lem:Rnpk}, we recover Lemma~\ref{lem:Qnp}.

\begin{table}[htb]
\[\begin{array}{c|cc|ccc}
&\multicolumn{2}{c|}{p=2}&\multicolumn{3}{c}{p=3}\\ \hline
n&k=1&k=2&k=1&k=2&k=3\\ \hline
1& 0&1-x&0&0&0\\
2& x-\dfrac{x^2}{2}&0&0&0&(1-x)^2\\
3& 0&\dfrac{x^2}{2}-\dfrac{x^3}{6}&x-x^2+\dfrac{x^3}{3}&x-\dfrac{3x^2}{2}+\dfrac{x^3}{2}&0\\
4& \dfrac{x^3}{6}-\dfrac{x^4}{24}&0&0&\dfrac{x^2}{2}-\dfrac{x^3}{3}+\dfrac{x^4}{12}&x^2-\dfrac{7x^3}{3}+\dfrac{7x^4}{24}
\end{array}\]
\[\begin{array}{c|cccc}
&\multicolumn{4}{c}{p=4}\\ \hline
n&k=1&k=2&k=3&k=4\\ \hline
1& 0&0&0&0\\
2& 0&0&0&0\\
3& 0&0&0&(1-x)^3\\
4& \dfrac{x(2-x)(2-2x+x^2)}{4}&\dfrac{x(1-x)(3-3x+x^2)}{3}&\dfrac{x(1-x)^2(2-x)}{2}&0
\end{array}\]
\caption{The polynomials $R_{n,p,k}(x)$ giving the probability that, when $p$ players remain and the distance to beat is $x\in[0,1]$, the $k$th player wins the game after $n$ further throws, for $n\le4$ and $p\le4$.}
\label{tab:Rnpk}
\end{table}

We can easily translate Lemma~\ref{lem:Rnpk} into a system of equations satisfied by the generating functions
$$R_{p,k}(x,z):=\sum_{n\ge0} R_{n,p,k}(x) z^n,$$
by simply multiplying each of the equations in Lemma~\ref{lem:Rnpk} by $z^n$ and summing over $n\ge0$.

\begin{lemma}\label{lem:Rpk}
For $p\ge2$, we have
\begin{align*}
R_{p,1}(x,z)&=z\int_0^x R_{p,p}(v,z)\,dv,\\ 
R_{p,k}(x,z)&=(1-x)zR_{p-1,k-1}(x,z)+z\int_0^x R_{p,k-1}(v,z)\,dv \qquad \text{for }2\le k\le p,
\end{align*}
where $R_{1,1}(x,z)=1$.
\end{lemma}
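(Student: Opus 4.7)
The plan is to derive each identity by the standard ``multiply by $z^n$ and sum over $n\ge 0$'' transformation, applied to the corresponding recurrence in Lemma~\ref{lem:Rnpk}. The base case $R_{1,1}(x,z)=1$ is immediate from the initial conditions $R_{0,1,1}(x)=1$ and $R_{n,1,1}(x)=0$ for $n\ge 1$, which leave only the $n=0$ term in the defining sum.

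For $p\ge 2$ and the first equation, I would multiply the identity $R_{n,p,1}(x)=\int_0^x R_{n-1,p,p}(v)\,dv$ by $z^n$ and sum over $n\ge 1$; the $n=0$ term contributes nothing since $R_{0,p,1}(x)=0$ for $p\ge 2$. Factoring out a single power of $z$, swapping the sum with the integral, and reindexing by $m=n-1$ turns the right-hand side into $z\int_0^x \sum_{m\ge 0} R_{m,p,p}(v)z^m\,dv = z\int_0^x R_{p,p}(v,z)\,dv$, as required. The second equation is handled in exactly the same way: summing the recurrence for $2\le k\le p$ against $z^n$ over $n\ge 1$, the $(1-x)R_{n-1,p-1,k-1}(x)$ summand becomes $(1-x)z R_{p-1,k-1}(x,z)$ after reindexing, and the integral summand becomes $z\int_0^x R_{p,k-1}(v,z)\,dv$.

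There is no real obstacle here; the lemma is a mechanical translation of a polynomial recurrence into its generating-function form. The only point worth mentioning for rigor is that the interchange of $\sum_n$ and $\int_0^x$ is unproblematic because, by the last observation in the proof of Lemma~\ref{lem:Rnpk}, each $R_{n,p,k}(x)$ is a polynomial in $x$, so everything takes place at the level of formal power series in $z$ with polynomial coefficients in $x$ and the integration is performed coefficient-wise.
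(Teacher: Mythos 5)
Your proposal is correct and matches the paper's approach exactly: the paper obtains Lemma~\ref{lem:Rpk} from Lemma~\ref{lem:Rnpk} by the same ``multiply by $z^n$ and sum over $n\ge0$'' translation, which it states without further detail. Your additional remarks on the initial conditions and the coefficient-wise interchange of sum and integral are fine but not needed beyond what the paper asserts.
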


Setting $z=1$ in Lemma~\ref{lem:Rpk}, which corresponds to disregarding the number of throws and only keeping track of the winning player, yields the following equations for $P_{p,k}(x)=R_{p,k}(x,1)=\sum_{n\ge0}R_{n,p,k}(x)$, first discovered by Niedermaier~\cite{Nie}.

\begin{corollary}\label{cor:Ppk}
For $p\ge2$, we have
\begin{align*}
P_{p,1}(x)&=\int_0^x P_{p,p}(v)\,dv,\\
P_{p,k}(x)&=(1-x)P_{p-1,k-1}(x)+\int_0^x P_{p,k-1}(v)\,dv \qquad \text{for }2\le k\le p,
\end{align*}
where $P_{1,1}(x,z)=1$.
\end{corollary}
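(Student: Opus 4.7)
The plan is to deduce this corollary as the $z=1$ specialization of Lemma~\ref{lem:Rpk}. First I would verify that $P_{p,k}(x)=R_{p,k}(x,1)$, which reduces to showing that the generating series $R_{p,k}(x,z)=\sum_{n\ge 0}R_{n,p,k}(x)z^n$ may be evaluated at $z=1$. Since each $R_{n,p,k}(x)\ge 0$ and the random variable $L_p(x)$ is almost surely finite (the probability distribution $\{Q_{n,p}(x)\}_n$ sums to $1$, by the remark following Theorem~\ref{thm:Qxuz}, whence $\sum_n\sum_k R_{n,p,k}(x)=1<\infty$), monotone convergence gives $R_{p,k}(x,1)=\sum_{n\ge 0} R_{n,p,k}(x)=P_{p,k}(x)$.

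Next I would substitute $z=1$ in each of the two identities of Lemma~\ref{lem:Rpk}. The interchange of the infinite sum with the integral, needed to pass from $\int_0^x R_{p,k-1}(v,z)\,dv$ at $z=1$ to $\int_0^x P_{p,k-1}(v)\,dv$, is again justified by monotone convergence, since the $R_{n,p,k-1}(v)$ are nonnegative polynomials on $[0,x]$. The base case $P_{1,1}(x)=1$ is immediate from $R_{1,1}(x,z)=1$.

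I do not expect any real obstacle. The essential probabilistic content is already contained in Lemma~\ref{lem:Rnpk}, which was obtained by conditioning on the outcome of the first throw; one could alternatively reprove Corollary~\ref{cor:Ppk} from scratch by the same conditioning argument, simply ignoring the number of throws (for the first identity, the first player's throw must land at some distance $v<x$ and she then reappears as player $p$ in a $p$-player game with distance to beat $v$; for the second, one conditions on whether her throw misses, contributing $(1-x)P_{p-1,k-1}(x)$, or succeeds at distance $v$, contributing $\int_0^x P_{p,k-1}(v)\,dv$). Either route is routine, the corollary being effectively a restatement of Lemma~\ref{lem:Rpk} in the marginal regime where the length of the game is forgotten.
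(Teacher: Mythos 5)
Your proposal is correct and follows the paper's own route: the paper obtains Corollary~\ref{cor:Ppk} precisely by setting $z=1$ in Lemma~\ref{lem:Rpk} and identifying $P_{p,k}(x)=R_{p,k}(x,1)=\sum_{n\ge0}R_{n,p,k}(x)$. Your additional care about justifying the evaluation at $z=1$ and the sum--integral interchange (via nonnegativity and $\sum_n Q_{n,p}(x)=1$) is a harmless elaboration of what the paper leaves implicit.
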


For small values of $p$, it is not hard to solve the systems of differential equations given by Lemma~\ref{lem:Rpk} and Corollary~\ref{cor:Ppk} to obtain expressions for $R_{p,k}(x,z)$ and $P_{p,k}(x)$, respectively.
The main result in this section is a general recursive procedure to compute these quantities for arbitrary $p\ge1$. Define the generating functions
$$R_p(x,t,z):=\sum_{k=1}^p R_{p,k}(x,z) t^k,\quad P_p(x,t):=\sum_{k=1}^p P_{p,k}(x) t^k.$$ 
Clearly, $R_p(x,t,1)=P_p(x,t)$, and $P_p(x,1)=\sum_{k=1}^p P_{p,k}(x)=1$.
The next theorem gives a method to compute $R_p(x,t,z)$.

\begin{theorem}\label{thm:Rp} 
For $p\ge2$, the generating functions $R_p(x,t,z)$ satisfy the recurrence
\begin{equation}\label{eq:Rp}
R_p(x,t,z)=\frac{t^{p+1}-t}{p}\sum_{j=0}^{p-1}\frac{\tR_p(x,\zeta_p^j,z)}{t-\zeta_p^j},
\end{equation}
where $\zeta_p=e^{2\pi i/p}$ is a primitive $p$th root of unity, and 
\begin{equation}\label{eq:tRp}\tR_p(x,t,z)=\left(z^{p-1}+ tz\int_0^x \frac{\partial }{\partial v}\left[(1-v)R_{p-1}(v,t,z)\right]e^{-vtz}\,dv\right)e^{xtz},\end{equation}
with initial condition $R_1(x,t,z)=t$. 
\end{theorem}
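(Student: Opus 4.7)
The plan is to derive a first-order linear ODE in $x$ for $R_p(x,t,z)$ by taking a $t$-weighted sum of the equations of Lemma~\ref{lem:Rpk}, notice that it decouples neatly when $t$ is a $p$-th root of unity, solve it in closed form to identify it with $\tR_p(x,t,z)$ at those values of $t$, and then recover $R_p(x,t,z)$ as a polynomial in $t$ by Lagrange interpolation.

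First I will multiply the equation for $R_{p,k}$ in Lemma~\ref{lem:Rpk} by $t^k$ and sum over $k=1,\dots,p$. After reindexing $j=k-1$, I recognize the sums $\sum_{j=1}^{p-1}t^j R_{p-1,j}(x,z)=R_{p-1}(x,t,z)$ and $\sum_{j=1}^{p-1}t^j R_{p,j}(v,z)=R_p(v,t,z)-t^p R_{p,p}(v,z)$. Combining the contribution of the $k=1$ equation with that of the $k\ge 2$ equations isolates a factor $1-t^p$, yielding the integral equation
\begin{equation*}
R_p(x,t,z)=(1-x)tz\,R_{p-1}(x,t,z)+tz\int_0^x R_p(v,t,z)\,dv+tz(1-t^p)\int_0^x R_{p,p}(v,z)\,dv.
\end{equation*}
Differentiating in $x$ produces the linear ODE
\begin{equation*}
\frac{\partial R_p}{\partial x}-tz\,R_p=\frac{\partial}{\partial x}\bigl[(1-x)tz\,R_{p-1}(x,t,z)\bigr]+tz(1-t^p)R_{p,p}(x,z).
\end{equation*}

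Next I will specialize $t$ to a $p$-th root of unity. The factor $1-t^p$ kills the coupling to the unknown $R_{p,p}$, so multiplying by the integrating factor $e^{-xtz}$ and integrating from $0$ to $x$ gives precisely the expression~\eqref{eq:tRp}, once the initial condition is established. For that I argue combinatorially: when $x=0$ every subsequent throw loses, the first $p-1$ players are eliminated in order, and player $p$ wins after exactly $p-1$ throws; so the only nonzero value of $R_{n,p,k}(0)$ is $R_{p-1,p,p}(0)=1$, giving $R_p(0,t,z)=t^p z^{p-1}$, which reduces to $z^{p-1}$ as soon as $t^p=1$. This yields $R_p(x,\zeta_p^j,z)=\tR_p(x,\zeta_p^j,z)$ for every $j\in\{0,\dots,p-1\}$.

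Finally, I will view $R_p(x,t,z)$ as a polynomial in $t$ of degree at most $p$ with vanishing constant term (the defining sum starts at $k=1$), and therefore determined by its values at the $p+1$ distinct points $0,1,\zeta_p,\dots,\zeta_p^{p-1}$. A Lagrange interpolation using $\prod_{i=0}^{p-1}(t-\zeta_p^i)=t^p-1$ and $\prod_{i\neq j}(\zeta_p^j-\zeta_p^i)=p\,\zeta_p^{-j}$ gives the basis polynomial $(t^{p+1}-t)/\bigl(p(t-\zeta_p^j)\bigr)$ attached to $\zeta_p^j$, while the basis polynomial attached to $0$ is annihilated by $R_p(x,0,z)=0$. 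Substituting $\tR_p(x,\zeta_p^j,z)$ for $R_p(x,\zeta_p^j,z)$ produces exactly~\eqref{eq:Rp}.

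The main obstacle I expect is the algebraic setup that exposes the factor $1-t^p$ in the integral equation, since that is what lets the root-of-unity evaluation eliminate the feedback from $R_{p,p}$ onto itself. A secondary subtlety is recognizing that the $p$ roots of unity alone underdetermine a polynomial of degree $p$, so the boundary datum $R_p(x,0,z)=0$ is essential to close the Lagrange step; everything else (the ODE, its integrating factor, and the interpolation identity) is routine once this structural observation is in place.
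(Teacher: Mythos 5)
Your proposal is correct and follows essentially the same route as the paper: both derive the ODE $\partial_x R_p = tz\,\partial_x[(1-x)R_{p-1}]+tzR_p+(t-t^{p+1})zR_{p,p}$ with $R_p(0,t,z)=t^pz^{p-1}$, exploit that the $R_{p,p}$ coupling vanishes when $t^p=1$ to solve for $\tR_p$, and reconstruct the degree-$p$ polynomial from its values at the $p$th roots of unity. The paper phrases the last step as reduction modulo $t^p-1$ followed by a roots-of-unity coefficient filter, whereas you use Lagrange interpolation at $\{0,1,\zeta_p,\dots,\zeta_p^{p-1}\}$ with the extra datum $R_p(x,0,z)=0$; these yield the identical basis polynomials $\frac{t^{p+1}-t}{p(t-\zeta_p^j)}$.
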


\begin{proof}
The initial condition $R_1(x,t,z)=R_{1,1}(x,z)\,t=t$ is clear from Lemma~\ref{lem:Rpk}.
Differentiating the equations in Lemma~\ref{lem:Rpk} with respect to $x$, we obtain 
\begin{align*}
\frac{\partial }{\partial x}R_{p,1}(x,z)&=z R_{p,p}(x,z),\\ 
\frac{\partial}{\partial x}R_{p,k}(x,z)&=z\,\frac{\partial }{\partial x}\left[(1-x)R_{p-1,k-1}(x,z)\right]+z R_{p,k-1}(x,z) \qquad \text{for }2\le k\le p.
\end{align*}
Multiplying the $k$th equation by $t^k$, for $1\le k\le p$, and adding them up, we get 
\[
\frac{\partial }{\partial x}\!\left[\sum_{k=1}^p R_{p,k}(x,z)\,t^k\right]=z\,\frac{\partial }{\partial x}\!\left[(1-x)\sum_{k=2}^p R_{p-1,k-1}(x,z)\,t^k\right]+zR_{p,p}(x,z)\,t+z\sum_{k=2}^p R_{p,k-1}(x,z)\,t^k,\]
which can be written more succinctly as
\begin{equation}\label{eq:RpRpp}\frac{\partial }{\partial x} R_p(x,t,z)=tz\,\frac{\partial }{\partial x}\left[(1-x)R_{p-1}(x,t,z)\right]+tzR_p(x,t,z)+(t-t^{p+1})zR_{p,p}(x,z).\end{equation}
One has the initial condition $R_p(0,t,z)=t^pz^{p-1}$, coming from the fact that, when $x=0$ and there are $p$ players left, the $p$th player wins after $p-1$ throws, where each of the first $p-1$ players is eliminated.

Equation~\eqref{eq:RpRpp} relates $R_p$ and $R_{p-1}$, but unfortunately it also has a term involving $R_{p,p}$. To get around this issue, first note that, by definition, $R_p(x,t,z)$ is a polynomial in $t$ of degree $p$, whose coefficients $R_{p,k}(x,z)$ are formal power series in $\Q[[x,z]]$.
In the quotient of the polynomial ring $\Q[[x,z]][t]$ by the ideal $\langle t^p-1\rangle$,
the polynomial $R_p(x,t,z)$ is congruent to $\tR_p(x,t,z)$, defined via the differential equation
\[ \frac{\partial }{\partial x} \tR_p(x,t,z)=tz\,\frac{\partial }{\partial x}\left[(1-x)R_{p-1}(x,t,z)\right]+tz\tR_p(x,t,z), \]
with initial condition $\tR_p(0,t,z)=z^{p-1}$. This differential equation has been obtained from~\eqref{eq:RpRpp} by canceling terms in the ideal $\langle t^p-1\rangle$. Solving this equation yields
\[
\tR_p(x,t,z)=\left(z^{p-1}+ tz\int_0^x \frac{\partial }{\partial v}\left[(1-v)R_{p-1}(v,t,z)\right]e^{-vtz}\,dv\right)e^{xtz},
\]
which agrees with equation~\eqref{eq:tRp}.

Thus, for each $1\le k\le p$, the coefficient of $t^k$ in $R_p(x,t,z)$ is obtained by combining all the coefficients of terms $t^n$ where $n\equiv k\bmod p$ in $\tR_p(x,t,z)$, that is,
\begin{equation}\label{eq:extract} R_{p,k}(x,z)=[t^k]R_p(x,t,z)=\sum_{n\equiv k\bmod p} [t^n] \tR_p(x,t,z)=\frac{1}{p}\sum_{j=0}^{p-1}\zeta_p^{-kj}R_p(x,\zeta_p^j,z),\end{equation}
where we used the fact that, if $A(t)=\sum_{n\ge0} a_{n} t^n$, then $$\frac{1}{p}\sum_{j=0}^{p-1} \zeta_p^{-kj} A(\zeta_p^j)=\frac{1}{p}\sum_{n\ge0} a_n \sum_{j=0}^{p-1} \zeta_p^{(n-k)j}=\sum_{n\equiv k\bmod p} a_n.$$

Finally, by equation~\eqref{eq:extract},
 $$R_p(x,t,z)=\sum_{k=1}^p R_{p,k}(x,z) t^k=\frac{1}{p}\sum_{j=0}^{p-1} R_p(x,\zeta_p^j,z) \sum_{k=1}^p \zeta_p^{-kj}t^k=\frac{1}{p}\sum_{j=0}^{p-1} R_p(x,\zeta_p^j,z) 
 \frac{t^{p+1}-t}{t-\zeta_p^j},$$
 proving equation~\eqref{eq:Rp}
\end{proof}

\subsection{Examples}\label{sec:examples}

In this section we show some expressions for $R_{p,k}$ and $P_{p,k}$ obtained from Theorem~\ref{thm:Rp} for small values of~$p$. Some of these had originally been obtained by Niedermaier~\cite{Nie}.

\begin{corollary}
For the $2$-player game,
\[ R_{2,1}(x,z)=1+\frac{z-1}{2}\e^{xz}-\frac{z+1}{2}\e^{-xz},\qquad R_{2,2}(x,z)=\frac{z-1}{2}\e^{xz}+\frac{z+1}{2}\e^{-xz}.\]
Setting $z=1$,
\[ P_{2,1}(x)=1-\e^{-x}, \qquad P_{2,2}(x)=\e^{-x},\]
and setting $x=1$,
\[ P_{2,1}=1-\e^{-1}\approx0.6321205588, \qquad P_{2,2}=\e^{-1}\approx0.3678794412.\]
\end{corollary}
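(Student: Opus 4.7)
The plan is to apply Theorem~\ref{thm:Rp} directly at $p=2$, starting from the initial condition $R_1(x,t,z)=t$. For $p=2$ the relevant roots of unity are just $\zeta_2^0=1$ and $\zeta_2^1=-1$, and equation~\eqref{eq:Rp} becomes
$$R_2(x,t,z)=\frac{t(t-1)(t+1)}{2}\left[\frac{\tR_2(x,1,z)}{t-1}+\frac{\tR_2(x,-1,z)}{t+1}\right],$$
so the task reduces to computing $\tR_2(x,\pm 1,z)$ from~\eqref{eq:tRp} and then reading off the coefficients of $t$ and $t^2$.

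To evaluate $\tR_2(x,t,z)$, I would exploit the fact that $R_1(v,t,z)=t$ is constant in $v$, which makes $\tfrac{\partial}{\partial v}[(1-v)R_1(v,t,z)]=-t$, so the integral appearing in~\eqref{eq:tRp} is the elementary integral of an exponential. A short calculation gives $\tR_2(x,t,z)=(z-t)\e^{xtz}+t$, and specializing at $t=\pm 1$ yields $\tR_2(x,1,z)=(z-1)\e^{xz}+1$ and $\tR_2(x,-1,z)=(z+1)\e^{-xz}-1$.

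Substituting these into the formula for $R_2(x,t,z)$ and simplifying yields
$$R_2(x,t,z)=\frac{t(t+1)}{2}\bigl[(z-1)\e^{xz}+1\bigr]+\frac{t(t-1)}{2}\bigl[(z+1)\e^{-xz}-1\bigr],$$
from which $R_{2,1}(x,z)$ and $R_{2,2}(x,z)$ are extracted as the coefficients of $t$ and $t^2$ respectively, matching the claimed expressions. The specialization $z=1$ kills the $\tfrac{z-1}{2}\e^{xz}$ term and produces $P_{2,1}(x)=1-\e^{-x}$ and $P_{2,2}(x)=\e^{-x}$; setting $x=1$ then gives the stated numerical values.

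I do not anticipate a meaningful obstacle here: the computation is short because only two roots of unity appear and $R_1$ is constant, so the integrand in~\eqref{eq:tRp} is trivial. The only things to be careful about are sign bookkeeping and the cancellation of the apparent poles at $t=\pm 1$ in the partial-fraction expression, which is automatic from the factor $t^3-t$ in equation~\eqref{eq:Rp}.
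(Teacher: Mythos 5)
Your proposal is correct and follows exactly the route the paper intends: the corollary is stated as an application of Theorem~\ref{thm:Rp} for $p=2$, and your computation of $\tR_2(x,t,z)=(z-t)\e^{xtz}+t$, its specializations at $t=\pm1$, and the extraction of the coefficients of $t$ and $t^2$ all check out. No issues.
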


\begin{corollary}
For the $3$-player game,
\delete{
\begin{align*}R_{3,1}(x,z)&=
\frac{ 2 {\mathrm e}^{\frac{x z}{2}}
\left( (z -1) \left(z +\frac{5}{3}\right) \cos\left(\frac{x z \sqrt{3}}{2}\right)
+\frac{ (z^{2}-4 z -3) \sin \left(\frac{x z \sqrt{3}}{2}\right)}{\sqrt{3}}\right)
-(z -1) \left(-\frac{8}{3}+\left(x^{2}-2 x \right) z^{2}+\left(x -1\right) z \right) {\mathrm e}^{2 x z}
-(-3 x +3) (z^{2}+z)}
{6} {\mathrm e}^{-x z} +1
\end{align*}
}
\begin{align*}
P_{3,1}(x)&=1+(x-1)\e^{-x}-\frac{2}{\sqrt{3}}\e^{-\frac{x}{2}}\sinsq,\\
P_{3,2}(x)&=-\e^{-x}+\e^{-\frac{x}{2}}\left(\frac{1}{\sqrt{3}}\sinsq+\cossq\right),\\
P_{3,3}(x)&=(2-x)\e^{-x}+\e^{-\frac{x}{2}}\left(\frac{1}{\sqrt{3}}\sinsq-\cossq\right).
\end{align*}
Setting $x=1$,
\[
P_{3,1}=1-\frac{2}{\sqrt{3\e}}\sin\!\left(\frac{\sqrt{3}}{2}\right)\approx0.4664928047,\quad
P_{3,2}\approx0.2918207124,\quad
P_{3,3}\approx0.2416864833.
\]
\end{corollary}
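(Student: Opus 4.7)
The plan is to derive and solve the third-order linear ODE satisfied by $P_{3,1}(x)$, applying Corollary~\ref{cor:Ppk} at $p=3$ together with the two-player formulas $P_{2,1}(x)=1-\e^{-x}$ and $P_{2,2}(x)=\e^{-x}$ just established.

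First, I would differentiate the three relations of Corollary~\ref{cor:Ppk} with respect to $x$ to convert the integral equations into the first-order linear system
\begin{align*}
P_{3,1}'(x) &= P_{3,3}(x), \\
P_{3,2}'(x) &= -P_{2,1}(x)+(1-x)P_{2,1}'(x)+P_{3,1}(x), \\
P_{3,3}'(x) &= -P_{2,2}(x)+(1-x)P_{2,2}'(x)+P_{3,2}(x),
\end{align*}
with initial conditions $P_{3,1}(0)=P_{3,2}(0)=0$ and $P_{3,3}(0)=1$, reflecting the fact (already used in the proof of Theorem~\ref{thm:Rp}) that when $x=0$ the next two players are eliminated and so the third wins.

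Next, I would eliminate $P_{3,2}$ and $P_{3,3}$ by successive differentiation to reduce the system to the scalar ODE
\[
P_{3,1}'''(x)-P_{3,1}(x)=-1+(5-2x)\,\e^{-x}.
\]
The characteristic polynomial $\lambda^3-1=(\lambda-1)(\lambda-\zeta_3)(\lambda-\zeta_3^2)$ has roots that account for the homogeneous modes $\e^x$ together with $\e^{-x/2}\cossq$ and $\e^{-x/2}\sinsq$, the latter two being the real and imaginary parts of $\e^{\zeta_3 x}$. A particular solution $1+(x-1)\,\e^{-x}$ can be found by undetermined coefficients, and the three integration constants are then pinned down by $P_{3,1}(0)=0$, $P_{3,1}'(0)=P_{3,3}(0)=1$, and $P_{3,1}''(0)=P_{3,3}'(0)=-2$, all read off from the system. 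Since $P_{3,1}$ is a probability and in particular bounded, the coefficient of the exploding mode $\e^x$ must vanish, providing a useful self-check.

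Finally, the other two probabilities are obtained by back-substitution: $P_{3,3}=P_{3,1}'$ and $P_{3,2}=P_{3,1}''+P_{2,2}-(1-x)P_{2,2}'$, the latter coming from solving the third equation for $P_{3,2}$. Setting $x=1$ then yields the numerical values. The main obstacle is not conceptual but organizational: correctly solving for the three integration constants and rewriting the complex exponentials in the real trigonometric form of the statement.
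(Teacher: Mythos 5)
Your derivation is correct, and I have checked the details: from Corollary~\ref{cor:Ppk} at $p=3$ one does get the system you write, with $P_{3,1}(0)=P_{3,2}(0)=0$, $P_{3,3}(0)=1$ (these in fact follow directly from evaluating the integral equations at $x=0$, consistently with the game interpretation); elimination gives $P_{3,1}'''-P_{3,1}=-1+(5-2x)\e^{-x}$, the particular solution $1+(x-1)\e^{-x}$ is right, and the conditions $P_{3,1}(0)=0$, $P_{3,1}'(0)=1$, $P_{3,1}''(0)=-2$ force the coefficients of $\e^{x}$ and of $\e^{-x/2}\cossq$ to vanish and the coefficient of $\e^{-x/2}\sinsq$ to be $-2/\sqrt{3}$, after which $P_{3,3}=P_{3,1}'$ and $P_{3,2}=P_{3,1}''+(2-x)\e^{-x}$ reproduce the stated formulas. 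The paper presents these examples as outputs of the general machinery of Theorem~\ref{thm:Rp}, i.e.\ computing $\tR_3(x,t,z)$ from \eqref{eq:tRp} and extracting coefficients via cube roots of unity, then setting $z=1$; your route instead solves the differential system coming from Corollary~\ref{cor:Ppk} directly, which the paper itself explicitly notes is feasible for small $p$. The trade-off is that your approach is more elementary and self-contained for $p=3$ but yields only $P_{3,k}(x)$ and does not scale as systematically (for larger $p$ the scalar ODE has order $p$ and the inhomogeneous term accumulates), whereas the roots-of-unity recursion also produces the $z$-refined series $R_{3,k}(x,z)$ and applies uniformly for all $p$. Your boundedness self-check on the $\e^{x}$ mode is a nice touch, though as computed the initial conditions already force that coefficient to zero.
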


\begin{corollary}
For the $4$-player game,
\begin{align*}
P_{4,1}(x)&=1+\frac{5}{4}(\cos x-\sin x)-\frac{3x^2-8x+5}{4}\e^{-x}\\
&\quad+\e^{-\frac{x}{2}}\left(\sqrt{3}\left(x - \frac{1}{3}\right)\sinsq-(x + 1)\cossq\right),\\
P_{4,2}(x)&=\frac{5}{4}(\cos x+\sin x)-\frac{x^2-6x+3}{4}\e^{-x}-\e^{-\frac{x}{2}}
\left(\frac{x + 3}{\sqrt{3}}\sinsq+(x+1)\cossq\right),\\
P_{4,3}(x)&=\frac{5}{4}(\sin x-\cos x)+\frac{x^2-7}{4}e^{-x}+\e^{-\frac{x}{2}}
\left(3\cossq-\frac{2x+1}{\sqrt{3}}\sinsq\right),\\
P_{4,4}(x)&=-\frac{5}{4}(\sin x+\cos x)+\frac{9x^2-42x+39}{12}\e^{-x} +\e^{-\frac{x}{2}}
\left(\frac{5}{\sqrt{3}}\sinsq+(2x-1)\cossq\right).
\end{align*}
Setting $x=1$,
\[
P_{4,1}\approx0.3711532353,\quad P_{4,2}\approx0.242188553,\quad P_{4,3}\approx0.2032205606, \quad P_{4,4}\approx0.1834376522.
\]
\end{corollary}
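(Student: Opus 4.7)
The plan is a direct application of Theorem~\ref{thm:Rp} with $p=4$ and $z=1$ (equivalently, of Corollary~\ref{cor:Ppk}), taking as input the formulas for $P_{3,k}(x)$ in the previous corollary. Writing $P_3(x,t):=\sum_{k=1}^3 P_{3,k}(x)\,t^k$, one sees that $P_3(x,t)$ is a $t$-linear combination of $1$, $\e^{-x}$, $x\e^{-x}$, $\e^{-x/2}\cossq$, and $\e^{-x/2}\sinsq$.

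First, I would compute $\tR_4(x,t):=\tR_4(x,t,1)$ at each of the four fourth roots of unity $t\in\{1,i,-1,-i\}$ via
\[ \tR_4(x,t)=\left(1+t\int_0^x \frac{\partial}{\partial v}\bigl[(1-v)P_3(v,t)\bigr]\e^{-vt}\,dv\right)\e^{xt}.\]
After multiplying $\partial_v\bigl[(1-v)P_3(v,t)\bigr]$ by $\e^{-vt}$, the integrand is in every case a linear combination of terms $v^j\e^{\alpha v}$, which antidifferentiate in closed form. The surviving exponential factors after multiplying back by $\e^{xt}$ are $\e^{-x}$ (times small polynomials in $x$), $\e^{-x/2\pm i\sqrt{3}x/2}$, and --- only for $t=\pm i$ --- $\e^{\mp ix}$; a potential $\e^{x}$ contribution at $t=-1$ cancels between the $\e^{xt}$ prefactor and the antiderivative evaluated at $x$.

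Second, I would invert via the roots-of-unity formula~\eqref{eq:extract}, which for $p=4$ reads
\[ P_{4,k}(x)=\tfrac{1}{4}\sum_{j=0}^{3} i^{-kj}\,\tR_4(x,i^j),\qquad k=1,2,3,4.\]
Since $P_3(x,t)$ has real polynomial coefficients in $t$, one has $\tR_4(x,-i)=\overline{\tR_4(x,i)}$ and $\tR_4(x,\pm 1)\in\mathbb R$, so the imaginary parts of the sum cancel identically. Pairing $\e^{\pm ix}$ produces the $\cos x$, $\sin x$ terms of the stated formulas, and pairing $\e^{-x/2\pm i\sqrt{3}x/2}$ produces the $\e^{-x/2}\cossq$, $\e^{-x/2}\sinsq$ terms. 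The numerical values at $x=1$ then follow by substitution, with the identity $\sum_{k=1}^4 P_{4,k}(1)=1$ (which holds by Corollary~\ref{cor:Ppk} since $P_p(x,1)=1$) serving as a useful consistency check.

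The main obstacle is purely computational: each $\tR_4(x,i^j)$ expands into roughly a dozen terms, and the Fourier inversion mixes these into many cross-terms that must be consolidated. There is no conceptual difficulty beyond antidifferentiating $v^j\e^{\alpha v}$; in practice I would delegate the algebra to a computer algebra system, grouping all terms by their underlying exponential factor to keep intermediate expressions manageable, and recombining complex-conjugate pairs into real trigonometric expressions only at the end.
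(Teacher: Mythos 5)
Your proposal follows exactly the route the paper intends: the corollary is presented as ``obtained from Theorem~\ref{thm:Rp}'', i.e., by computing $\tR_4(x,t,1)$ at the fourth roots of unity from the $P_{3,k}(x)$ formulas and inverting via the discrete Fourier formula~\eqref{eq:extract}, which is precisely your plan, and your structural accounting of which exponentials survive is correct. The only (harmless) slip is that for $t=i$ the surviving oscillatory factor is $\e^{+ix}$, coming from the prefactor $\e^{xt}$ multiplying the constant terms, rather than $\e^{-ix}$; this does not affect the argument, since pairing with the conjugate value $\tR_4(x,-i)=\overline{\tR_4(x,i)}$ still produces the real $\cos x$ and $\sin x$ terms.
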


For the $5$-player game, the plot of the functions $P_{5,k}(x)$ for $1\le k\le 5$ over $x\in[0,1]$ appears in Figure~\ref{fig:P12345}. The formulas resulting from Theorem~\ref{eq:Rp} are long, so we only include the expression for $P_{5,1}(1)$ as an example:
\begin{multline*}
P_{5,1}=
1-\frac{25}{2}\cos(1)
 -{\frac { \sqrt {10-2\,\sqrt {5}}\left( 3\,\sqrt {5}+2\right) }{10}\, \e^{\frac{\sqrt{5}-1}{4}}\sin\! \left( \frac{\sqrt {10-2\,\sqrt {5}} \left( \sqrt {5}+1 \right)}{8}  \right) }\\
 +{\frac {  3\,\sqrt {5}+7 }{2}\, 
\e^{\frac{\sqrt {5}-1}{4}}\cos\!\left(\frac{\sqrt {10-2\,\sqrt {5}}
 \left( \sqrt {5}+1 \right)}{8}  \right) }  
-{\frac {3\,\sqrt {5}-7 }{2}\, \e^{\frac{-\sqrt{5}-1}{4}}\cos\! \left( \frac{\sqrt {10-2\,\sqrt {5}}}{4} \right) } 
 \\ 
-{\frac {\sqrt {10-2\,\sqrt {5}}\left( \sqrt {5}+13 \right)}{20}\,\e^{\frac{-\sqrt{5}-1}{4}}
  \sin\! \left( \frac{\sqrt {10-2\,\sqrt {5}}}{4} \right) } 
 +5\,\e^{-\frac12}\left(\cos\! \left( \frac{\sqrt {3}}{2} \right) +\frac{1}{\sqrt{3}}\sin\!\left(\frac{\sqrt {3}}{2} \right) \right).
\end{multline*}
\delete{
\begin{align*}P_{5,1}=
1-\frac{25}{2}\cos(1) +{\e^{-1/2}} 
&\left( {\frac {  3\,\sqrt {5}+7 }{2}\, 
\e^{\frac{1+\sqrt {5}}{4}}\cos\!\left(\frac{\sqrt {10-2\,\sqrt {5}}
 \left( \sqrt {5}+1 \right)}{8}  \right) } \right. \\ 
& -{\frac { \sqrt {10-2\,\sqrt {5}} \left( 3\,\sqrt {5}+2\right) }{2}\, \e^{\frac{1+\sqrt{5}}{4}}\sin\! \left( \frac{\sqrt {10-2\,\sqrt {5}} \left( \sqrt {5}+1 \right)}{8}  \right) }\\
&-{\frac {3\,\sqrt {5}-7 }{10}\, \e^{\frac{1-\sqrt{5}}{4}}\cos\! \left( \frac{\sqrt {10-2\,\sqrt {5}}}{4} \right) }\\
&-{\frac {\sqrt {10-2\,\sqrt {5}}\left( \sqrt {5}+13 \right)}{20}\,\e^{\frac{1-\sqrt{5}}{4}}
  \sin\! \left( \frac{\sqrt {10-2\,\sqrt {5}}}{4} \right) }\\
 &\left. 
 +5\cos \left( \frac{\sqrt {3}}{2} \right) +\frac{5}{\sqrt{3}}\sin \left(\frac{\sqrt {3}}{2} \right) \right)
\end{align*}}
The numerical values of $P_{5,k}$ are approximately
\begin{gather}
P_{5,1}\approx0.3088745194, \quad P_{5,2}\approx0.2071760032, \quad P_{5,3}\approx0.1754708034,\\
P_{5,4}\approx 0.1592777163,\quad P_{5,5}\approx 0.1492009703.
\end{gather}

\begin{figure}[htb]
\centering
\begin{tikzpicture}
 \node[anchor=south west,inner sep=0] (image) at (0,0) {\includegraphics[width=.55\textwidth]{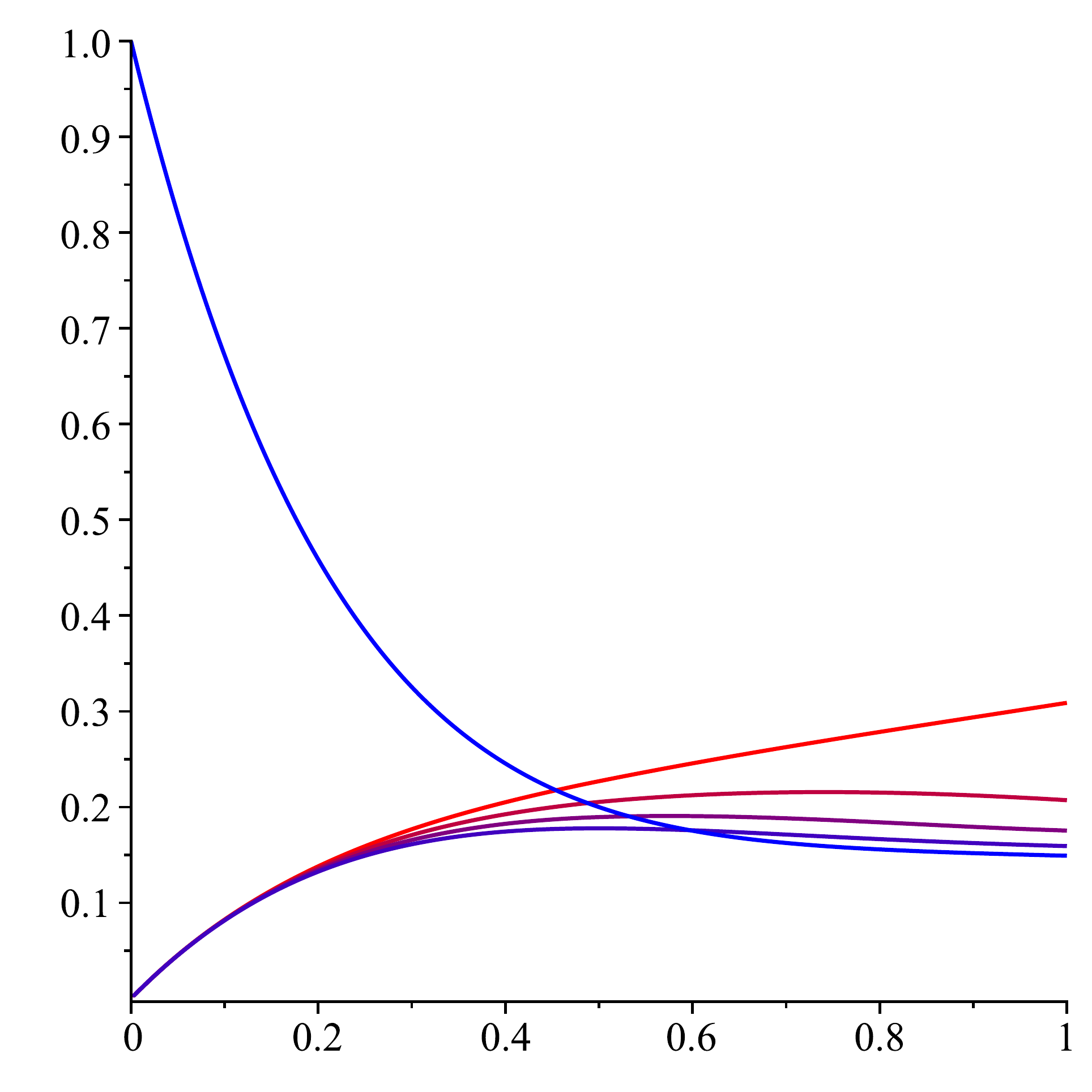}};
    \begin{scope}[x={(image.south east)},y={(image.north west)}]
    \draw (.3,.65) node {\color[rgb]{0,0,1} $P_{5,5}(x)$};
    \draw (.52,.21) node {\color[rgb]{0.25,0,.75} $P_{5,4}(x)$};
    \draw (1.05,.24) node {\color[rgb]{0.5,0,.5} $P_{5,3}(x)$};
    \draw (1,.30) node {\color[rgb]{0.75,0,.25} $P_{5,2}(x)$};
    \draw (.95,.39) node {\color[rgb]{1,0,0} $P_{5,1}(x)$};
    \draw (.55,0.03) node {$x$};
    \end{scope}
\end{tikzpicture}
\caption{The plots of the functions $P_{5,k}(x)$ for $1\le k\le 5$.}
\label{fig:P12345}
\end{figure}

\subsection{A recurrence for the number of permutations encoding games with a given winner} 

For $1\le k\le p$ and $n\ge0$,  let $\W_{n,p,k}$ denote the set of permutations in $\S_n$ encoding $p$-player games in which player $k$ is the winner, and let $w_{n,p,k}=|\W_{n,p,k}|$. 
By equation~\eqref{eq:complete}, we have $\sum_{k=1}^p w_{n,p,k}=(n-1)c(n-1,n-p+1)$ for $n\ge p\ge2$, since this is the number of permutations in $\S_n$ encoding complete $p$-player games. 
It is also clear that
\begin{equation}\label{eq:Rw} R_{n,p,k}=\frac{w_{n,p,k}}{n!}. \end{equation}
In particular, the numbers $w_{n,p,k}$ can be computed by first using the recurrence in Lemma~\ref{lem:Rnpk} to compute $R_{n,p,k}(x)$, and then substituting at $x=1$ and dividing by $n!$. 

Next we give an alternative recurrence to compute these numbers. Since the only permutation of length $n\le1$ that encodes a complete game is the empty permutation (corresponding to the trivial $1$-player game), we will assume in the rest of this section that $n\ge2$.
Instead of the variable $x$, let us consider an additional parameter that keeps track of the first entry of the permutation. For $1\le s\le n$, define
$$\W_{n,p,k,s}=\{\pi\in\W_{n,p,k}:\pi_1=s\}$$ 
and $w_{n,p,k,s}=|\W_{n,p,k,s}|$, so that $w_{n,p,k}=\sum_{s=1}^n w_{n,p,k,s}$.

\begin{theorem}
For $n\ge3$, $p\ge2$, $1\le k\le p$ and $1\le s\le n$, the numbers $w_{n,p,k,s}$ 
satisfy the recurrence
\begin{equation}\label{eq:w}
w_{n,p,k,s}=\sum_{s'=1}^{s-1} w_{n-1,p,\overline{k-1},s'}+(n-s)w_{n-1,p-1,\hat{k},s},
\end{equation}
where $\overline{k-1}\in[p]$ is congruent to $k-1$ modulo $p$,  and
$$\hat{k}=\begin{cases} k-1 & \text{for $k\ge3$},\\ 1 & \text{for $k=1$.} \end{cases}
$$
For $k=2$, the value $\hat{k}$ is not defined and the right summand in~\eqref{eq:w} does not appear.
The initial conditions are $w_{n,p,k,s}=0$ whenever $p=1$ or $n=2$, except that $w_{2,2,1,1}=1$.
\end{theorem}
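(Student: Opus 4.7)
The plan is to partition $\W_{n,p,k,s}$ according to the value of $\pi_2$: either $\pi_2 < s$, so player $2$ stays, or $\pi_2 > s$, so player $2$ is eliminated. Each case reduces to a smaller game via a deletion-and-standardization operation, and the two contributions will recover the two summands in~\eqref{eq:w}.

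Case 1 ($\pi_2 < s$): Since $\pi_1 = s > \pi_2$, the value $s$ is not the minimum of any prefix of $\pi$ of length at least $2$. Thus, deleting $\pi_1$ and standardizing (subtracting one from entries exceeding $s$) yields $\pi' \in \S_{n-1}$ whose left-to-right minima, shifted in position by one, are in bijection with those of $\pi$ at positions $\ge 2$. It follows that $\pi'$ encodes a complete $p$-player game. In the reduced game, the line begins with the original player $2$, so the original player $k$ becomes the new player $\overline{k-1}$. As $\pi'_1 = \pi_2$ ranges over $[s-1]$, and $\pi \mapsto \pi'$ is inverted by inserting the value $s$ at position $1$ (with the corresponding shift), this case contributes $\sum_{s'=1}^{s-1} w_{n-1, p, \overline{k-1}, s'}$.

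Case 2 ($\pi_2 > s$): Here $k \ne 2$ since player $2$ is eliminated. Symmetrically, $\pi_2 > \pi_1 = s$ implies that $\pi_2$ is not the minimum of any prefix of $\pi$ of length at least $2$, so deleting $\pi_2$ and standardizing yields $\pi' \in \S_{n-1}$ with $\pi'_1 = s$, whose left-to-right minima correspond to those of $\pi$ at position $1$ and at positions $\ge 3$. Hence $\pi'$ encodes a complete $(p-1)$-player game. After the first two throws of $\pi$, the remaining line reads $3, 4, \dots, p, 1$, so the original player $1$ becomes the new player $1$ and, for $j \ge 3$, the original player $j$ becomes the new player $j-1$; this is exactly the map $k \mapsto \hat{k}$. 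Since $\pi_2$ can be any of the $n-s$ values in $\{s+1, \dots, n\}$, and $\pi$ is reconstructed by inserting a chosen $\pi_2$ at position $2$, this case contributes $(n-s)\, w_{n-1, p-1, \hat{k}, s}$.

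The main technical point is ensuring that the player-to-position correspondence persists throughout the remainder of the game, not merely at the moment of reduction. This follows automatically once one verifies that the pattern of left-to-right minima is preserved by the deletion: the sequences of stays and eliminations in $\pi$ and $\pi'$ then agree under the cyclic re-indexing fixed at the start of each case, so the original winner really does map to the claimed new winner. Finally, the initial conditions follow directly: a $1$-player game requires no throws, so $w_{n,1,k,s} = 0$ for $n \ge 1$; and the unique complete game on two throws is $\pi = 12$, giving $w_{2,2,1,1} = 1$ and all other $w_{2,p,k,s} = 0$.
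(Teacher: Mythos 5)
Your proposal is correct and follows essentially the same route as the paper: partition by whether $\pi_2<\pi_1$ or $\pi_2>\pi_1$, delete $\pi_1$ or $\pi_2$ respectively, check that the left-to-right-minima pattern is preserved, and track the re-indexing of players ($k\mapsto\overline{k-1}$ in the first case, $k\mapsto\hat k$ in the second). The verification of the initial conditions also matches the paper's.
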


\begin{proof}
The initial condition when $p=1$ is due to the fact that the $1$-player game has length $n=0$ and we are assuming $n\ge2$.
The values for $n=2$ are easy to check, since the only permutation of length $2$ encoding a complete game is $12$.
Now suppose that $n\ge3$, and let $\pi\in\W_{n,p,k,s}$ and $s'=\pi_2$.

If $s>s'$, then the permutation $\pi'$ obtained from $\pi$ by removing $\pi_1$ (and subtracting $1$ from the values larger than $\pi_1$) satisfies $\pi'\in \W_{n-1,p,\overline{k-1},s'}$. Indeed, by removing the first throw $\pi_1$, the resulting game proceeds like the original game but with the role of player $k$ now being played by player $\overline{k-1}$. Conversely, given 
$\pi'\in \W_{n-1,p,\overline{k-1},s'}$, one can insert any value $s>s'$ at the beginning and shift the larger values up by one accordingly to obtain a permutation in $\W_{n,p,k,s}$. This construction gives a bijection between $\{\pi\in\W_{n,p,k,s}:\pi_2=s'\}$ and $\W_{n-1,p,\overline{k-1},s'}$.

If $s<s'$, then the permutation $\pi'$ obtained from $\pi$ by removing $\pi_2$ (and subtracting $1$ from the values larger than $\pi_2$) satisfies $\pi'\in \W_{n-1,p-1,\hat{k},s}$. Indeed, by removing the throw $\pi_2$ where player 2 was eliminated, the resulting game proceeds like the original game but with the role of player $j$ (for each $j\ge3$) now being played by player $j-1$. 
Conversely, given 
$\pi'\in \W_{n-1,p-1,\hat{k},s}$, one can insert any value $s'>s$ right after the first entry and shift the larger values up by one accordingly to obtain a permutation in $\W_{n,p,k,s}$. This construction gives a bijection between $\{\pi\in\W_{n,p,k,s}:\pi_2=s'\}$ and $\W_{n-1,p-1,\hat{k},s}$. There are $n-s$ possible values for $s'$ in this case, so the recurrence follows.
\end{proof}

\section{Monotonicity of winning probabilities}\label{sec:monotone}

The numerical values for the probabilities in Section~\ref{sec:examples}, as well as basic intuition, suggest that each player has higher changes of winning than the next.
In this section we prove that this is always the case.

\begin{proposition}\label{prop:monotone}
For $p\ge2$ and $k\in[p-1]$,
$$P_{p,k}>P_{p,k+1}.$$
\end{proposition}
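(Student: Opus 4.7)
The approach is a conditional case analysis on the throws of players $k$ and $k+1$, combined with exchangeability arguments to cancel symmetric contributions. Assume by Remark~\ref{rem:uniform} that the throws $X_1, X_2, \ldots$ are i.i.d.\ uniform on $[0,1]$. The crucial starting observation is that in any $p$-player game, the first $p$ throws are made by players $1, 2, \ldots, p$ in order, since eliminating an earlier player never promotes a later player through the queue in the first round. Hence we can condition on the game state just before throw $k$: let $m = \min(X_1, \ldots, X_{k-1})$ be the current distance to beat, and let $r$ denote the number of left-to-right minima among $\pi_1, \ldots, \pi_{k-1}$, i.e.\ the number of survivors from $\{1, \ldots, k-1\}$. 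Set $s = p - k + r$.

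Partition the outcomes of $(X_k, X_{k+1})$ into five cases based on whether each throw is less than or at least $m$, and on their relative order. In each case, identify the post-throw lineup and express the conditional winning probabilities of players $k$ and $k+1$ in terms of the smaller-game quantities $P_{s,s}(\cdot)$, $P_{s+1,s}(\cdot)$, and $P_{s+1,s+1}(\cdot)$. By the exchangeability of $X_k$ and $X_{k+1}$, the two cases where exactly one throw exceeds $m$ produce contributions to $P_{p,k}$ and $P_{p,k+1}$ that cancel in the difference; the case where both exceed $m$ contributes zero. Combining the remaining contributions yields
\[
P_{p,k} - P_{p,k+1} = \E\!\left[\int_0^m (m-v)\,\Phi_s(v)\,dv\right],
\]
where $\Phi_s(v) := P_{s,s}(v) + P_{s+1,s}(v) - P_{s+1,s+1}(v)$ and the outer expectation is over the random state $(m, r)$.

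It then suffices to show that $\Phi_s(v) \geq 0$ on $[0,1]$, with strict positivity on a set of positive measure. Since $P_{s+1,s}(v) \geq 0$ always (and is strictly positive for $v > 0$), this reduces to the pointwise monotonicity $P_{s+1,s+1}(v) \leq P_{s,s}(v)$, i.e.\ ``the probability that the last player wins is nonincreasing in the total number of players.'' I would attack this by induction on $s$: the base case is $P_{2,2}(v) = e^{-v} \leq 1 = P_{1,1}(v)$; for the inductive step, the recurrence $P_{p,p}(v) = (1-v)P_{p-1,p-1}(v) + \int_0^v P_{p,p-1}(u)\,du$ from Corollary~\ref{cor:Ppk} gives
\[
P_{s,s}(v) - P_{s+1,s+1}(v) = (1-v)\bigl[P_{s-1,s-1}(v) - P_{s,s}(v)\bigr] + \int_0^v \bigl[P_{s,s-1}(u) - P_{s+1,s}(u)\bigr]\,du,
\]
in which the first term is nonnegative by induction but the second requires the companion inequality $P_{s,s-1}(u) \geq P_{s+1,s}(u)$.

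The main obstacle is closing this induction cleanly, since the companion inequality is of the same flavor and spawns further analogous ones. The natural strengthening is to prove, simultaneously by induction on $p$, the joint monotonicity statement $P_{p,k}(v) \geq P_{p+1,k+1}(v)$ for all $1 \leq k \leq p$ and $v \in [0,1]$; the recurrences above show that each instance reduces to the preceding ones in the family, so a carefully staged induction should close. Once this monotonicity chain is established, strict positivity of $P_{p,k} - P_{p,k+1}$ follows from $P_{s+1,s}(v) > 0$ on $(0,1]$ together with the almost sure positivity of $m$, completing the proof.
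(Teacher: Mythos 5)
Your conditioning argument is a genuinely different route from either of the paper's two proofs, and the reduction itself is correct: the continuation of the game depends only on the current minimum and the ordered queue of survivors, the positions you assign to players $k$ and $k+1$ in the residual queue are right, the exchangeability cancellation of the two mixed cases is valid, and the surviving cases do integrate to $\E\bigl[\int_0^m (m-v)\Phi_s(v)\,dv\bigr]$ (I checked this reproduces $P_{2,1}-P_{2,2}=1-2\e^{-1}$). The problem is that the whole proof now rests on $\Phi_s(v)\ge 0$, which you do not prove, and the induction you sketch for it is not merely delicate but circular. Writing $C(p,k)$ for the claim $P_{p,k}(v)\ge P_{p+1,k+1}(v)$, Corollary~\ref{cor:Ppk} gives, for $k\ge 2$, that $C(p,k)$ reduces to $C(p-1,k-1)$ and $C(p,k-1)$; but for $k=1$ one gets
\[
P_{p,1}(x)-P_{p+1,2}(x)=xP_{p,1}(x)-\int_0^x P_{p+1,1}(u)\,du=\int_0^x\bigl[P_{p,1}(x)-P_{p+1,1}(u)\bigr]du,
\]
and the only visible way to bound this is through comparing $P_{p,p}$ with $P_{p+1,p+1}$, i.e.\ $C(p,p)$. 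So within each level $p$ the dependencies form the cycle $C(p,p)\to C(p,p-1)\to\cdots\to C(p,1)\to C(p,p)$, and no staging of the induction closes it as stated. A more promising repair of your own approach: from $P_{s+1,s+1}(v)=(1-v)P_{s,s}(v)+\int_0^v P_{s+1,s}(u)\,du$ one gets $\Phi_s(v)=vP_{s,s}(v)+P_{s+1,s}(v)-\int_0^v P_{s+1,s}(u)\,du$, so it would suffice to prove that $P_{s+1,s}(\cdot)$ is nondecreasing on $[0,1]$, since then $\int_0^v P_{s+1,s}(u)\,du\le vP_{s+1,s}(v)$ and $\Phi_s(v)\ge vP_{s,s}(v)+(1-v)P_{s+1,s}(v)>0$ for $v>0$. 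That monotonicity is a single, more tractable lemma, but it still needs a proof.

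For contrast, the paper sidesteps all comparisons between the refined quantities $P_{p,k}(x)$ by exhibiting a direct measure-preserving injection from outcomes where player $k+1$ wins to outcomes where player $k$ wins: probabilistically, swap the entire throw sequences of players $k$ and $k+1$; combinatorially, swap (and in one case delete) the corresponding entries of the encoding permutation and sum the resulting inequalities over the game length $n$. That one-step coupling is what your local, first-throw-only exchange buys back only at the price of the unproved pointwise inequality $\Phi_s\ge 0$; as written, your argument has a genuine gap there.
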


\begin{proof}[Probabilistic proof]
For $1\le j\le p$ and $i\ge1$, consider the random variable $X_{j,i}$  that describes the distance to the center of the $i$th throw of player $j$ in the game, if such throw is needed. Each array of values $\{X_{j,i}\}_{j\in[p],i\ge1}$ yields some winner, but note that only the values up until the throw where each player is eliminated are used. 

We will describe a measure-preserving injection between arrays for which player $k+1$ is the winner to arrays for which player $k$ is the winner.
Suppose that an array $\{X_{j,i}\}$ yields player $k+1$ as the winner.  Let $\{X'_{j,i}\}$ be the array obtained by switching rows $k$ and $k+1$, that 
is, letting $X'_{j,i}=X_{s_k(j),i}$ for all $i$ and $j$, where $s_k(k)=k+1$, $s_k(k+1)=k$, and $s_k(j)=j$ for $j\notin\{k,k+1\}$.
Then the array $\{X'_{j,i}\}$ yields player $k$ as the winner. Indeed, since $X'_{k+1,1}=X_{k,1}>X_{k+1,1}=X'_{k,1}$, player $k+1$ is eliminated after his first throw, and the rest of the game proceeds in as the original game with player $k$ playing the role of player $k+1$, and thus winning.

It follows that $P_{p,k}\ge P_{p,k+1}$. To see that the inequality is strict, note that any array $\{X'_{j,i}\}$ for which player $k$ is the winner but player $k+1$ does not get eliminated right after his first throw does not arise as the image of the above transformation. Such an outcome occurs with positive probability, and so the inequality is strict.
\end{proof}

\begin{proof}[Combinatorial proof]
By equation~\eqref{eq:Rw} and the fact that $P_{p,k}=\sum_{n\ge0} R_{n,p,k}$, it is enough to show that
$$\sum_{n\ge0} \frac{w_{n,p,k}}{n!}>\sum_{n\ge0} \frac{w_{n,p,k+1}}{n!}.$$
Note that it is not true that $w_{n,p,k}\ge w_{n,p,k+1}$ in general, as can be seen already for $p=2$.
Instead, we will decompose each of the sets $\W_{n,p,k}$ and $\W_{n,p,k+1}$ as a union of two subsets, and use them to describe injections to prove the inequality.
We will assume that $n\ge p$, since otherwise $w_{n,p,k}=w_{n,p,k+1}=0$.

Let $\U_{n,p,k+1}\subseteq\W_{n,p,k+1}$ be the subset of permutations encoding games where the last throw does not belong to player $k$, and let
$\Uc_{n,p,k+1}=\W_{n,p,k+1}\setminus\U_{n,p,k+1}$.
Let $\V_{n,p,k}\subseteq\W_{n,p,k}$ be the subset of permutations encoding games where player $k+1$'s losing throw would still have been a losing throw even if it had taken place before the previous throw (which belongs to player $k$), and let $\Vc_{n,p,k}=\W_{n,p,k}\setminus\V_{n,p,k}$.

Next we construct injections 
\begin{equation}\label{eq:inj}
\U_{n,p,k+1}\to\V_{n,p,k} \qquad \text{and} \qquad \Uc_{n,p,k+1}\to\Vc_{n-1,p,k}\times\{2,3,\dots,n\}.
\end{equation}

Given $\pi\in\U_{n,p,k+1}$, suppose that $\pi_i$ encodes player $k$'s losing throw. Then $\pi_{i+1}$ encodes a throw by player $k+1$ (the winner), and thus $i+1$ is a left-to-right minimum of $\pi$. Let $\pi'$ be obtained from $\pi$ by swapping $\pi_i$ and $\pi_{i+1}$. Then $i$
is a left-to-right minimum of $\pi'$, since $\pi'_i=\pi_{i+1}$, corresponding to a throw of player $k$, and $\pi'_{i+1}$ is now player $k+1$'s losing throw. In the rest of the game encoded by $\pi'$, player $k$ plays the role that player $k+1$ played in $\pi$, and so player $k$ is now the winner. Additionally, player $k+1$'s losing throw $\pi'_{i+1}=\pi_i$ would still have been a losing throw if it had taken place before $\pi'_i$, so $\pi'\in\V_{n,p,k}$. The map $\pi\mapsto\pi'$ is clearly an injection from $\U_{n,p,k+1}$ to $\V_{n,p,k}$.

Given $\pi\in\Uc_{n,p,k+1}$, we know by definition that $\pi_n$ encodes player $k$'s losing throw. Suppose that $\pi_i$ encodes player $k$'s last good throw, and so $\pi_{i+1}$ encodes a good throw by player $k+1$, meaning that both $i$ and $i+1$ are left-to-right minima of $\pi$. Let $\pi''$ be the permutation obtained from $\pi$ by swapping $\pi_i$ and $\pi_{i+1}$, removing $\pi_n$, and subtracting one from the values larger than $\pi_n$ to get a permutation of $[n-1]$. Then $i$  is a left-to-right minimum of $\pi''$ which corresponds to a throw of player $k$, and $\pi''_{i+1}$ corresponds to player $k+1$'s losing throw, which implies that player $k$ is now the winner. In this case, player $k+1$'s losing throw $\pi''_{i+1}$ would not have been a losing throw if it had taken place before $\pi''_i$, so $\pi''\in\Vc_{n,p,k}$. The map $\pi\to(\pi'',\pi_n)$ is clearly an injection from $\Uc_{n,p,k+1}$ to $\Vc_{n-1,p,k}\times\{2,3,\dots,n\}$.

Letting $u_{n,p,k+1}=|\U_{n,p,k+1}|$, and defining $\uc$, $v$ and $\vc$ analogously, the injections~\eqref{eq:inj} imply that $u_{n,p,k+1}\le v_{n,p,k}$ and
$\uc_{n,p,k+1}\le (n-1) \vc_{n-1,p,k}$. Adding these inequalities,
$$w_{n,p,k+1}\le v_{n,p,k}+(n-1) \vc_{n-1,p,k},$$
from where
\begin{align*}
\sum_{n\ge p} \frac{w_{n,p,k+1}}{n!} & \le  \sum_{n\ge p} \frac{v_{n,p,k}}{n!}+ \sum_{n\ge p}\frac{(n-1) \vc_{n-1,p,k}}{n!}\\
& < \sum_{n\ge p} \frac{v_{n,p,k}}{n!}+ \sum_{n\ge p}\frac{\vc_{n-1,p,k}}{(n-1)!}=\sum_{n\ge p} \frac{v_{n,p,k}}{n!}+ \sum_{n\ge p}\frac{\vc_{n,p,k}}{n!}=\sum_n \frac{w_{n,p,k}}{n!}.
\qedhere
\end{align*}
\end{proof}

\section{Open questions}

We finish by listing a few possible avenues for further research.

\begin{itemize}
\item As the number of players tends to infinity, does the (properly rescaled) probability distribution $\{P_{p,k}\}_{k=1}^p$ converge? If so, what is the limit distribution?
\item Is there a closed form for the generating function $R(x,t,u,z)=\sum_{p\ge1} R_p(x,t,z)u^p$? Note that, by definition, $R(x,1,u,z)=Q(x,u,z)$, given by Theorem~\ref{thm:Qxuz}.
\item Consider other refinements, for example by keeping track of the last player eliminated, or, more generally, of the permutation describing the ranking of the players (the opposite of the order in which they are eliminated).
\item Consider variations of the game where players get better over time. For example, the permutation determined by the distances of the first $n$ throws could follow a Mallows distribution with parameter $q>1$.
\end{itemize}

\section*{Acknowledgements} 
We thank Andy Niedermaier for introducing this game, for obtaining some preliminary results, and for allowing us to include them here.

\end{document}